\documentclass[12pt,psamsfonts]{amsart}
\usepackage{amsmath}
\usepackage{amsthm}
\usepackage{amssymb}
\usepackage{amscd}
\usepackage{amsfonts, mathrsfs}
\usepackage{amsbsy}
\usepackage{amssymb}
\usepackage{enumerate}
\usepackage{hyperref}

\textwidth=14truecm \textheight=20.5truecm

\newtheorem {theorem}{Theorem}[section]
\newtheorem {proposition} [theorem]{Proposition}

\newtheorem {lemma}  [theorem]{Lemma}

\newtheorem{definition}[theorem]{Definition}
\newtheorem{thmx}{Theorem}

\newcommand{\R}{\mathbb{R}}

\newcommand{\N}{\mathbb{N}}
\newcommand{\Z}{\mathbb{Z}}

\renewcommand{\P}{\mathscr{P}}
\renewcommand{\L}{\mathscr{L}}

\newcommand{\ve}{\varepsilon}
\newcommand{\vp}{\varphi}
\newcommand{\g}{\gamma}
\renewcommand{\o}{\omega}
\newcommand{\al}{\alpha}

\newcommand{\s}{s}
\renewcommand{\a}{\alpha}
\newcommand{\h}{h}

\newcommand{\p}{\partial}
\newcommand{\mb}{\mathbf}
\newcommand{\id}{\text{id}}


\newcommand{\sys}[2]{
  \left\{\!
   \begin{array}{l}
    \dot x=#1 \\[2pt] \dot y=#2
   \end{array}
  \right.
}

\newcommand{\THM}{Theorem B in \cite{Grau2011} for even potentials}
\newcommand{\LEM}{Lemma 4.1 in \cite{Grau2011}}

\begin{document}
\title[limit cycles for perturbed pendulum equations]
{On the number of limit cycles\\ for perturbed pendulum equations}
\author{A.~Gasull, A.~Geyer  and F.~Ma\~{n}osas }

\address{Departament de Matem\`{a}tiques,
Universitat Aut\`{o}noma de Barcelona, 08193 Bellaterra, Barcelona, Spain; \newline $\phantom{x\;}$ Faculty of Mathematics,
University of Vienna, \mbox{Oskar-Morgenstern-Platz 1}, 1090 Vienna, Austria;} \email{gasull@mat.uab.cat,
anna.geyer@univie.ac.at,\newline $\phantom{xxxxxxxxxxxxxx}$ manyosas@mat.uab.cat} \date{}

\subjclass[2010]{Primary: 34C08. Secondary: 34C07; 34C25; 37G15; 70K05.}

\keywords{Limit cycles, perturbed pendulum equation, Infinitesimal Sixteenth Hilbert problem,
Abelian integrals}

\maketitle


\begin{abstract}
We consider perturbed pendulum-like equations on the cylinder of the form $  \ddot x+\sin(x)= \varepsilon
\sum_{\s=0}^{m}{Q_{n,\s} (x)\, \dot x^{\s}}$ where $Q_{n,\s}$ are trigonometric polynomials of
degree $n$, and  study  the number  of limit cycles that bifurcate from
the periodic orbits of the unperturbed case $\varepsilon=0$ in terms of $m$ and $n$. Our first result gives
upper bounds on the
number of zeros of its associated first order Melnikov function, in both the oscillatory and the
rotary regions. These upper bounds are obtained expressing the corresponding Abelian integrals in
terms of polynomials and the complete elliptic functions of first and second kind. Some further
results give sharp bounds on the number of zeros of these integrals by identifying subfamilies which
are shown to be Chebyshev systems.
\end{abstract}


\setcounter{tocdepth}{2}


\section{Introduction}

The so-called \emph{Hilbert's $16^{th}\,$Problem} was proposed by David Hilbert at the Paris
conference of the International Congress of Mathematicians in $1900$. The problem is to  determine
the upper bound for the number of limit cycles in two-dimensional polynomial vector fields of
degree $d$, and to investigate their relative positions, see~\cite{Ilia,Li}. There is also a
weaker version, the so-called \emph{infinitesimal or tangential Hilbert's $16^{th}\,$Problem},
proposed by Arnold, which can be stated in the following way: let $\o$ be a real $1$-form with
polynomial coefficients of degree at most $d$, and consider a polynomial $H$ of degree $d+1$. A
closed connected component of a level curve of $H=h$, denoted by $\g_h$, is called an \emph{oval}
of $H$. These ovals form continuous families. The infinitesimal Hilbert's $16^{th}\,$Problem then
asks for an upper bound $V(d)$ of the number of real zeros of the Abelian integral
\begin{equation*}
    I(h) = \int_{\g_h} \o.
\end{equation*}
The bound should be uniform with respect to the polynomial $H$, the
family of ovals $\{\g_h\}$ and the form $\o$, i.e. it should only
depend on the degree $d$, cf.~\cite{Ilia, Iliev1999}. The existence
of $V(d)$ goes back to the works of Khovanskii and Varchenko
(\cite{Kh,Var}). Recently an explicit (non realistic) bound  for
$V(d)$ has been given in \cite{Bin} by Binyamini, Novikov and
Yakovenko.

 There is
a beautiful relationship between limit cycles and zeros of Abelian integrals: Consider a small
deformation of a Hamiltonian vector field
\begin{equation*}
    X_{\ve} = X_H + \ve Y,
\end{equation*}
where $X_H=-H_y \p_x + H_x\p_y$, $Y=P \p_x + Q\p_y$ and $\ve>0$ is a small parameter. Denote by
$d(h,\ve)$ the \emph{displacement function of the  Poincar\'{e} map} of $X_{\ve}$ and consider its
power series expansion in $\ve$. The  coefficients in this expansion are called \emph{Melnikov
functions $M_k(h)$}. Therefore, the limit cycles of the vector field correspond to isolated zeros
of the first non-vanishing Melnikov function. A closed expression of the  first Melnikov function
$M_1(h)=I(h)$ was obtained by Pontryagin which is  given by the Abelian integral
\[I(h)=\int_{\g_h} \o,\quad \mbox{with}\quad \o=P \,dy-Q\, dx.\]
 Hence the number of isolated zeros of $I(h)$, counting multiplicity, provide an
upper bound for the number of ovals of $H$ that generate limit cycles of $X_{\ve}$ for $\ve$ close
to zero. The coefficients of $P$ and $Q$ are considered as parameters, and so $I(h)$ splits into a
linear combination $I(h)=\alpha_0 I_0(h)+ \dots + \alpha_{\ell} I_{\ell}(h),$ for some
$\ell\in\N$, where the coefficients $\alpha_k$ depend on initial parameters and $I_k(h)$ are
Abelian integrals with some $\o_k=x^{i_k} y^{j_k} dx$. Therefore, the problem of finding the
maximum number of isolated zeros of $I(h)$ is equivalent to finding an upper bound for the number
of isolated zeros of any function belonging to the vector space generated by $I_j(h)$, $j=0, \dots
\ell.$ This equivalent problem becomes easier when the basis of this vector space is
a \emph{Chebyshev system}, see Section \ref{pb} for details.\\

We are interested in these considerations  because we want to  analyze in terms of
$m$ and $n$ the number of periodic orbits for perturbed pendulum-like equations of the form
\begin{equation}
\label{E-pendu}
    \ddot x+\sin(x)= \ve \sum_{\s=0}^{m}{Q_{n,\s} (x)\, \dot x^{\s}},
\end{equation}
where for each $s$ the functions $Q_{n,s}$ are trigonometric polynomials of degree at most $n$
and $\ve>0$ is a small parameter.
 The planar system  associated to~\eqref{E-pendu} can be viewed as a trigonometric
 perturbation  of the  Hamiltonian  system
\begin{equation}
\label{E-HamSys}
     \sys{y,}{-\sin(x),}
\end{equation}
with total energy
\begin{equation}
\label{E-Hamiltonian}
    H(x,y)= \frac{y^2}{2} +1-\cos(x),
\end{equation}
which in fact can be considered on the cylinder  $ [-\pi,\pi]\times \R$. In other words, we are
interested  in quantifying in terms of $m$ and $n$  the number of limit cycles  that bifurcate
from the closed ovals of the unperturbed pendulum equation $\ddot x+\sin x=0.$ This problem  can
be seen as an extension of the infinitesimal Hilbert's $16^{th}\,$Problem to the trigonometrical
world.

Notice that for $h\in(0,2)$ the levels $\gamma_h=\{(x,y);H(x,y)=h\}$ are ovals surrounding the
origin, while for $h\in (2,\infty)$ the corresponding levels have two connected components which
are again ovals, one of them contained in the region $y>0$ denoted by $\gamma_h^+$, and the other
one contained in the region $y<0$ denoted by $\gamma_h^-.$ The region corresponding to energies
$h\in(0,2)$ is usually called {\it oscillatory region} and we will denote it by $\mathcal{R}^0.$
The regions with energies $h\in(2,\infty)$ and $\pm y>0$ will be denoted  by $\mathcal{R}^\pm$ and
both together form the so-called {\it rotary region}.\\

The analysis of equations of this form  is also motivated by a
number of problems resulting from pendulum-like equations appearing
in the literature. Examples include  the system
\begin{equation*}
    \sys{ y,}{\alpha+\beta y+\gamma y^2+\delta\cos(2\pi x),}
\end{equation*}
where  $\alpha, \beta, \gamma$ and $\delta$  are real parameters,
which was considered in \cite{CheGasLli87}. Another interesting
example with  pendulum-type behaviour  is the equation
\begin{align*}
    \ddot x+\sin( x ) = \ve \dot x \cos (nx),\quad n\in \N,
\end{align*}
considered by Morozov in \cite{Morozov1989}. The author proves that for $\ve>0$ small enough this
system has exactly  $n-1$ hyperbolic limit cycles in $\mathcal{R}^0$, and no limit cycles in
$\mathcal{R}^\pm.$
The proof relies on a representation of the Abelian integrals in
terms of polynomials and the complete elliptic functions of first and second kind.

 A further example of a pendulum-like equation is  the Josephson equation
\begin{align*}
    \ddot x + \sin(x)=\varepsilon[a-\left(1+\gamma\cos(x)\right)\dot x],
\end{align*}
where $\ve>0$ is a small parameter and $a,\gamma\in\R$. This equation  was studied by various
authors \cite{Bel77a,Ken,Sanders1983,Sanders1986} by analyzing the corresponding averaged system
whose right-hand side consists of three Abelian integrals. Instead of  expressing
these integrals in terms of complete elliptic integrals the authors of \cite{Sanders1986} use
techniques from bifurcation theory to find the bifurcation diagram and corresponding phase
portraits on the cylinder for the resulting two-parameter family of vector fields. Realizing  that
the aforementioned  Abelian integrals satisfy a certain Picard-Fuchs equation, they then analyze
the solutions of the resulting Riccati equations.

Another very related problem is the study of the periodic solutions of the perturbed  whirling
pendulum,
\[
\ddot x= \sin x (\cos x-\gamma)+\ve (\cos x+\alpha)\,\dot x,
\]
performed in~\cite{L1}, with $\alpha$ and $\gamma$ real parameters and $\ve$ a small parameter.
Notice that in this problem the unperturbed Hamiltonian is not system~\eqref{E-HamSys}.
\\

To state our results we first fix some notation and definitions. A {\em Fourier polynomial of
degree $n$} is an element of the $2n$ dimensional real linear space generated by
$1,\sin(x),\ldots\sin(nx), \cos(x),\ldots,\cos(nx).$ It is well-known that this space is the same
as the space of degree $n$ two variable polynomials in $(\sin(x),\cos(x)).$ Given a Fourier
polynomial $P(x)=\sum_{i=0}^na_i\sin(ix)+b_i\cos(ix)$ we denote by $P^e(x)$ its even part, that
is, $P^e(x)=\sum_{i=0}^nb_i\cos(ix).$ Note that any even Fourier polynomial of degree $n$ can be
equivalently written as a degree $n$ polynomial in $\cos(x).$ At different points in the paper we
will choose the expression more suitable for our respective interest. From now on we denote by
$E(x)$ the integer part for any real number $x$.

Our first main result gives general upper bounds for the number of zeros of the first Melnikov
integral.


\begin{thmx}
\label{T-thm bounds}
 Consider the system
    \begin{equation}
      \label{E-penduSysGen}
           \sys{y,}{-\sin( x)+ \ve \sum_{\s=0}^{m}Q_{n,\s}(x) y^{\s},}
     \end{equation}
where  $Q_{n,\s}$ are Fourier polynomials of degree $n$ and let
$M^0:(0,2)\longrightarrow \R,$ and $M^{\pm}:(2,\infty)\longrightarrow
\R$ be their associated first Melnikov functions defined  by
\[
 M^0(h)  = \int_{\g_h} \sum_{\s=0}^{m}Q_{n,\s}(x) y^{\s}\, dx,\quad M^{\pm }(h)
  = \int_{\g_h^{\pm}} \sum_{\s=0}^{m}Q_{n,\s}(x)
 y^{\s}\, dx,
\]
 in $\mathcal {R}^0$ and $\mathcal {R}^\pm$, respectively. Then the
following statements hold:
\begin{enumerate}

\item[(a)]For all $h\in(2,\infty)$ \begin{align*}
    M^+(h)  &= \int_{\g_h^+} \sum_{\s=0}^{m}Q^e_{n,\s}(x) y^{\s} \, dx.
\end{align*}
 Moreover, if $M^+(h)$ is not identically zero in $(2,\infty)$ then it has at most
 $2n + 2m + E\left ({m}/{2}\right)+2$ zeros  counting
multiplicity in the interval $(2,\infty).$ The same result holds
    for $M^-(h).$

    \item[(b)] For all $h\in(0,2)$,
        \begin{align*}
            M^0(h) &=  \int_{\g_h} \sum_{\s=0}^{E((m-1)/{2})}Q^e_{n,2\s+1}(x)
            y^{2\s+1} \, dx.
        \end{align*}
        Moreover, if $M^0(h)$ is not identically zero in $(0,2)$ then it has at
most $2n +2E({(m-1)}/{2}) + 1$ zeros counting multiplicity in  $(0,2)$
\end{enumerate}
\end{thmx}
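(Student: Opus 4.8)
The plan is to reduce the Melnikov functions to linear combinations of finitely many basic Abelian integrals and then control the number of zeros of each piece.

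First I would exploit the symmetry of the unperturbed system. The Hamiltonian $H(x,y)=y^2/2+1-\cos x$ is even in both $x$ and $y$, so the oval $\g_h$ (in $\mathcal R^0$) is symmetric under $(x,y)\mapsto(-x,y)$ and under $(x,y)\mapsto(x,-y)$, while $\g_h^\pm$ are symmetric under $(x,y)\mapsto(-x,y)$ only. For a monomial term $Q_{n,s}(x)\,y^s\,dx$, I split $Q_{n,s}=Q_{n,s}^e+Q_{n,s}^o$ into even and odd Fourier parts. On $\g_h^\pm$ the factor $y^s\,dx$ is odd under $x\mapsto -x$, so $\int_{\g_h^\pm}Q_{n,s}^o(x)y^s\,dx=0$, giving part (a)'s first identity; on $\g_h$ one additionally uses the $y\mapsto -y$ symmetry, under which $y^s\,dx$ changes sign exactly when $s$ is even, killing all even-$s$ contributions and leaving only odd $s$ with even coefficient part, which is part (b)'s first identity. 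So it remains to bound zeros of the surviving integrals $\int Q^e(x)y^k\,dx$ where $Q^e$ is a polynomial in $\cos x$ of degree $\le n$.

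Next I would set up the explicit parametrization. On a level $H=h$ we have $y^2=2h-2(1-\cos x)$, and writing $\cos x$ in terms of $h$ via the half-angle substitution $u=\sin(x/2)$ (so $1-\cos x=2u^2$ and $y^2=2(h-2u^2)$), each integral $\int_{\g_h}\cos^j x\, y^{2\ell+1}\,dx$ becomes, after expanding $\cos^j x=(1-2u^2)^j$ and $y^{2\ell+1}=(2(h-2u^2))^{\ell}\,y$, a finite sum of integrals of the form $\int u^{2p}(h-2u^2)^{\ell+1/2}\,du$-type expressions over the appropriate $u$-range. These are classical: by the standard reduction formulas for $\int u^{2p}\sqrt{\text{quadratic}}$ one shows every such integral lies in the module generated over $\R[h]$ by the complete elliptic integrals $K$ and $E$ (of modulus depending on $h$), i.e. $M^0(h)$ and $M^\pm(h)$ can be written as $A(h)K(k(h))+B(h)E(k(h))$ with $A,B$ polynomials whose degrees I would track carefully as functions of $n$ and $m$; this is exactly the representation announced in the abstract and used by Morozov. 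The bookkeeping here — getting the degrees of $A$ and $B$ sharp enough to yield $2n+2m+E(m/2)+2$ and $2n+2E((m-1)/2)+1$ — is the main obstacle, and I expect it to require a careful Picard–Fuchs or integration-by-parts recursion rather than a crude estimate, most likely packaged as the lemma \LEM.

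Finally, to convert a representation $A(h)K+B(h)E$ into a zero bound I would invoke the derivative/Wronskian technique: $K$ and $E$ satisfy a second-order linear ODE (equivalently a Picard–Fuchs system), so any combination $AK+BE$ with $A,B$ polynomials satisfies a linear ODE of order $2$ with polynomial coefficients, and one bounds its zeros on the relevant interval by a combination of Rolle's theorem and the number of zeros of the Wronskian-type auxiliary polynomial, whose degree is controlled by $\deg A$ and $\deg B$. Summing the admissible monomials $y^s$, $s=0,\dots,m$ (respectively odd $s$ up to $m$), and the $n+1$ powers of $\cos x$, then feeding the resulting polynomial degrees into this count, produces the stated bounds. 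I would treat the boundary behaviour at $h\to 0^+$, $h\to 2$, and $h\to\infty$ separately to make sure the multiplicity count is honest at the endpoints. The delicate part throughout is degree accounting; the elliptic-integral ODE machinery and the symmetry reduction are comparatively routine.
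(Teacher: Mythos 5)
Your symmetry reductions are sound and essentially the paper's own first step: the odd Fourier parts vanish because the integrand $\sin(jx)\bigl(\sqrt{h-1+\cos(x)}\bigr)^{i}$ is odd in $x$, and in $\mathcal{R}^0$ the even powers of $y$ drop out (the paper does this via Green's Theorem, you via the $y\mapsto-y$ symmetry of the oval; both are fine). So the two integral identities in (a) and (b) are established. The rest of your proposal, however, is a plan whose two load-bearing steps are respectively misstated and missing, and as written it would not reach the stated bounds.

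First, in the rotary region your claimed representation $M^{\pm}(h)=A(h)K+B(h)E$ is structurally wrong: the even powers $y^{2s}$ do \emph{not} vanish on $\gamma_h^{\pm}$, and they integrate to genuine polynomials in $h$ of degree up to $E(m/2)$, while the odd powers give $\sqrt{h}\,\bigl(P(h)K(\sqrt{2/h})+Q(h)E(\sqrt{2/h})\bigr)$. Thus $M^{\pm}$ has the three-term form $Z(h)+\sqrt{h}\,\bigl(P(h)K+Q(h)E\bigr)$ (Proposition~\ref{P bound outside}), and the term $E(m/2)$ in the bound $2n+2m+E(m/2)+2$ comes precisely from how this polynomial part is removed: the paper differentiates $E(m/2)+1$ times, using that the $\{K,E\}$-module is closed under differentiation with degree growing by one per derivative (Lemma~\ref{L-differentiating P K + Q E}), bounds the zeros of the resulting $PK+QE$ expression, and then climbs back with Rolle. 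Your plan of ``summing the admissible monomials and feeding the degrees into the count'' contains no mechanism producing this term, so for part (a) it cannot yield the stated number.

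Second, the quantitative engine is not supplied. The paper's counts rest on two ingredients you leave open: the degree bookkeeping $I^0_{n,2r+1}(h)=P_{n+r}(h)K+Q_{n+r}(h)E$ with $\deg=n+r$ (Propositions~\ref{P-Ln Elliptic} and~\ref{P bound inside}, obtained from an integration-by-parts recursion for $L_n$), which you explicitly defer as ``the main obstacle''; and the sharp zero bound for $f(k)K(k)+g(k)E(k)$, namely $\deg f+\deg g+2$ zeros in $(-1,1)$ (Theorem~\ref{T-thm Gasull Elliptic}, i.e.\ Theorem~1 of \cite{Gasull2002}, proved by the argument principle), to which one must also apply the substitution $k^2=2/h$ (or $h/2$) and an evenness argument to halve the count. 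Your proposed substitute --- that $AK+BE$ satisfies a second-order linear ODE with polynomial coefficients, plus a Wronskian/Rolle argument --- is only sketched and does not by itself produce any explicit bound, let alone the sharp one needed to arrive at $2n+2E((m-1)/2)+1$ and $2n+2m+E(m/2)+2$. In short: the identities are proved, but the zero bounds, which are the substance of the theorem, are not.
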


\smallbreak


The bounds given in Theorem~\ref{T-thm bounds} are not optimal. In the following two
theorems, we give optimal bounds for some particular cases in
the oscillatory region (Theorem \ref{T-thm inside}) as well as the
rotatory region (Theorem \ref{T-thm outside}). To this end, given
two natural numbers $s_1\le s_2$, we denote
$$o(s_1,s_2)=\left\{
    \begin{array}{ll}
        E\left({(s_2-s_1)}/{2}\right)-1, & \hbox{if $s_1$ and $s_2$ are even;} \\
        E\left({(s_2-s_1)}/{2}\right), & \hbox{otherwise.}
    \end{array}
                    \right.$$
A simple computation
 shows that $o(s_1,s_2)$ is the number of odd integers in $[s_1,
s_2]$ minus one.  Notice that $o(0,m)=E((m-1)/2).$ In case that $o(s_1,s_2)\ge 0$ we define
$l(s_1,s_2)$ as the first odd integer in $[s_1,s_2]$.


\begin{thmx}\label{T-thm inside} Consider the system
    \begin{equation*}
      \label{E-penduSysGen2}
           \sys{y,}{-\sin( x)+ \ve \sum_{s=s_1}^{s_2}Q_{n,s}(x) y^{s},}
     \end{equation*}
where  $Q_{n,s}$ are Fourier polynomials of degree $n$ and let $M^0:(0,2)\longrightarrow \R$ be
its associated first Melnikov function on the period annulus $\mathcal {R}^0$, defined by
\begin{equation*}
    M^0(h)  = \int_{\g_h} \sum_{s=s_1}^{s_2}Q_{n,s}(x) y^{s}\, dx.
\end{equation*}
Set $r=o(s_1,s_2)$ and $\ell= l(s_1,s_2)$ when $r\ge 0.$ Then,
\begin{enumerate}[(a)]

\item If $r=-1$ then the system has a center at the origin for all $\ve,$
and no limit cycles bifurcates from $\mathcal {R}^0$.

\item If $r\ge0$ then
$$M^0(h)=\int_{\gamma_h}\sum_{s=0}^{r}Q^e_{n,\ell+2s}(x)
y^{\ell+2s}\, dx$$ and it holds that:
\begin{enumerate}

\item [$(b_1)$]  If $0\le r<(\ell+3)/2$ and $M^0$ is not identically zero then
it has at most $n+2r$ zeros counting multiplicity. Moreover, if $r\le 2$ and $n>0$ then there
exist even Fourier polynomials $Q_{n,s}(x)$  such that the Melnikov function has exactly $n+2r$
zeros counting multiplicity.

\item [$(b_2)$] If $s_1=s_2$ is odd then there are at most $n$ limit
cycles that bifurcate from the period annulus. This bound is optimal.

\end{enumerate}
\end{enumerate}
\end{thmx}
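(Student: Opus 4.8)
I would prove the three items separately, putting almost all the effort into the upper bound of $(b_1)$; the argument is a sharpened version of the one behind Theorem~\ref{T-thm bounds}. For part (a), I would first observe, directly from the definition of $o(s_1,s_2)$, that $r=-1$ is only possible when $s_1=s_2$ and this common value is even, so the perturbation reduces to $\ve\,Q_{n,s_1}(x)\,y^{s_1}$ with $s_1$ even. Since $y^{s_1}$ is then even in $y$, the planar vector field is reversible with respect to $(x,y,t)\mapsto(x,-y,-t)$ for every $\ve$, hence it has a center and the oscillatory region remains entirely filled with periodic orbits; in particular no limit cycle bifurcates from $\mathcal{R}^0$.

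For part (b) I would start, as in Theorem~\ref{T-thm bounds}, from the $(x,y)\mapsto(x,-y)$ symmetry of the ovals $\g_h\subset\mathcal{R}^0$: it annihilates all contributions of even powers $y^s$ and of the odd parts of the $Q_{n,s}$, and since the odd integers of $[s_1,s_2]$ are precisely $\ell,\ell+2,\dots,\ell+2r$ this yields the displayed formula for $M^0$. Writing $\ell=2K+1$ and substituting, on $\g_h$, $y^{\ell+2s}=\bigl(2(h-1+\cos x)\bigr)^{s}y^{\ell}$, I would rewrite $M^0(h)=\int_{\g_h}P(\cos x,h)\,y^{2K+1}\,dx$ with $P$ a polynomial satisfying $\deg_{\cos x}P\le n+r$ and $\deg_h P\le r$. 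An integration-by-parts / Picard--Fuchs reduction of the type of \LEM{} then expresses $M^0$ as $\mathcal A(h)I_0(h)+\mathcal B(h)I_1(h)$, where $I_0(h)=\int_{\g_h}y\,dx$ and $I_1(h)=\int_{\g_h}\cos(x)\,y\,dx$ are, up to explicit algebraic factors, the complete elliptic integrals of first and second kind of modulus $k=\sqrt{h/2}$, and $\mathcal A,\mathcal B\in\R[h]$ have degrees controlled by $n,r,K$; the hypothesis $r<(\ell+3)/2$, i.e.\ $r\le K+1$, is exactly what keeps $\deg\mathcal A,\deg\mathcal B$ in the admissible range.

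The heart of the proof is the zero count. Under $r<(\ell+3)/2$ I would show that the linear span of all the $M^0$ so obtained is an extended complete Chebyshev system on $(0,2)$ of dimension $n+2r+1$, by applying the Chebyshev criterion for Abelian integrals of potential systems to the even potential $V(x)=1-\cos x$ (\THM{} together with \LEM) and checking that the relevant Wronskians have no zeros on $(0,2)$; it is this non-vanishing that pins down the threshold $r<(\ell+3)/2$. This immediately bounds the number of zeros of $M^0$, counting multiplicity, by $n+2r$. For the optimality statement ($r\le2$, $n>0$) I would additionally verify that the linear map $\bigl(Q^e_{n,\ell},\dots,Q^e_{n,\ell+2r}\bigr)\mapsto M^0$ is onto this $(n+2r+1)$-dimensional space, which for these small $r$ is a finite explicit linear-algebra computation, and then take a preimage of an element of the Chebyshev system having $n+2r$ simple zeros.

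Finally, $(b_2)$ is the case $s_1=s_2$ odd: then $r=0$, $\ell=s_1$, the condition $r<(\ell+3)/2$ holds automatically, and $(b_1)$ gives at most $n$ zeros of $M^0$, hence at most $n$ bifurcating limit cycles, the optimality being the $r=0$ instance of the construction above (and trivial for $n=0$). The main obstacle I anticipate is the middle stage: carrying out the Picard--Fuchs reduction with sharp enough degree bookkeeping, and verifying the Wronskian non-vanishing so that the Chebyshev criterion applies --- that is precisely where the threshold $r<(\ell+3)/2$ must emerge --- with the explicit optimality constructions for $r\le2$ a secondary, more computational, point.
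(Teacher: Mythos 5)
Your part (a), your derivation of the displayed formula for $M^0$ via the two symmetries of $\g_h$, and the general shape of the optimality argument all agree with the paper. The genuine gap is in the core of $(b_1)$, the upper bound $n+2r$, where your reduction goes in the wrong direction. Substituting $y^{\ell+2s}=\bigl(2(h-1+\cos x)\bigr)^{s}y^{\ell}$ pushes every term \emph{down} to the power $y^{\ell}$ and puts $h$ into the integrand, after which neither of your proposed tools can produce the sharp bound: the reduction to $\mathcal A(h)I_0(h)+\mathcal B(h)I_1(h)$ with elliptic generators and polynomial coefficients is exactly the route of Theorem~\ref{T-thm bounds} and, via Theorem~\ref{T-thm Gasull Elliptic}, only yields a bound of size roughly $2(n+r)+1$, not $n+2r$; and the criterion of \cite{Grau2011} (Theorem~\ref{T-THM}) applies to integrals $\int_{\g_h}f_i(x)\,y^{2v-1}\,dx$ whose integrands do not depend on $h$ and, crucially, admits at most $v+1$ functions, so applied at the power $y^{\ell}$ it can never certify an ECT-system of dimension $n+2r+1$ (it caps the dimension at $(\ell+3)/2$). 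The missing idea is the opposite reduction: apply Lemma~\ref{L-LEM} repeatedly, through the operator $\L[f]=(f/\sin x)'$, to \emph{raise} all terms to the common power $y^{\ell+2(n+r)}$, and use the bookkeeping of Lemma~\ref{L-operatorL}, namely $\L^{j}[\cos(mx)]=P_{j,m}(\cos x)/\sin^{2j}(x)$ with $\deg P_{j,m}=j-m$, to write $M^0(h)=\int_{\g_h}\frac{R(\cos x)}{\sin^{2(n+r)}(x)}\,y^{\ell+2n+2r}\,dx$ with $\deg R\le n+2r$. Then Lemma~\ref{L-Tche} gives at most $n+2r$ zeros, and the threshold $r<(\ell+3)/2$ is precisely the hypothesis of Theorem~\ref{T-THM} relating the number of functions to the exponent ($q<v+1$ with $2v-1=\ell+2n+2r$); it is not pinned down by any Wronskian non-vanishing — the Chebyshev property of the integrand family is checked trivially, since it reduces to powers of $\cos x$ composed with a diffeomorphism of $(0,\pi)$ onto $(-1,1)$.

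Two smaller points. In $(b_2)$ you cannot conclude ``at most $n$ limit cycles'' from $(b_1)$ alone, because $(b_1)$ is silent when $M^0\equiv0$, and $(b_2)$ is a statement about limit cycles rather than about zeros of a nonvanishing Melnikov function; the paper closes this case by observing that $M^0\equiv0$ forces $Q^e_{n,s_1}\equiv0$, in which case the system is reversible and has a center for every $\ve$, so no limit cycle bifurcates. Also, the surjectivity you invoke for the optimality in $(b_1)$ is not a single finite computation: it must be established for all $n$ (with $r\le2$ fixed), which the paper does by explicit choices such as $Q_{n,0}(x)=\cos(ix)$, exploiting that $\deg P_{j,m}=j-m$ yields a basis of the polynomials of degree $n+2r$.
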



Item $(b_1)$ in Theorem~\ref{T-thm inside} gives upper bounds for the number of zeros of the first
Melnikov function in $\mathcal {R}^0$. Statement $(b_1)$ also says that these bounds are optimal
when $n>0$ and $r\le 2$. In fact, we think that they are optimal for all $r$ when $n>0$, but we
have not been successful in proving it. In the case $n=0$ these bounds are not optimal because of
the following result:

\begin{proposition}\label{p-nova}
The system
    \begin{equation*}
      \label{E-penduSysGen3}
           \sys{y,}{-\sin( x)+ \ve \sum_{s=0}^{r}a_s y^{2s+1},\quad r\le30,}
     \end{equation*}
has at most $r$ limit cycles  bifurcating from the period annulus $\mathcal {R}^0$.
\end{proposition}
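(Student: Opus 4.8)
The plan is to identify the first Melnikov function with a combination of a distinguished family of Abelian integrals, to reduce the claim to the statement that this family is a Chebyshev system on $(0,2)$, and finally to reduce \emph{that} to an explicit polynomial condition which can be checked for $r\le 30$.

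Since the perturbation involves only constant coefficients and odd powers of $y$, the first Melnikov function on $\mathcal{R}^0$ is
\[
 M^0(h)=\int_{\gamma_h}\sum_{s=0}^{r}a_s y^{2s+1}\,dx=\sum_{s=0}^{r}a_s I_s(h),\qquad
 I_s(h):=\int_{\gamma_h}y^{2s+1}\,dx
\]
(in this special case this is also the content of Theorem~\ref{T-thm inside}(b)), and it is classical that the number of limit cycles bifurcating from $\mathcal{R}^0$ is at most the number of zeros of $M^0$ in $(0,2)$ counted with multiplicity, whenever $M^0\not\equiv0$ --- if $M^0\equiv0$ then every $a_s$ vanishes and there is nothing to prove. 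I would then record three elementary facts: (i) $I_0(h)$ equals, up to a sign, the area enclosed by $\gamma_h$, hence has constant sign and does not vanish on $(0,2)$; (ii) differentiating under the integral sign, where the boundary contributions at the turning points vanish because $y=0$ there and $2s+1>0$, yields $I_s'(h)=(2s+1)I_{s-1}(h)$ for $s\ge1$; (iii) $I_s(h)\sim C_s h^{s+1}$ as $h\to 0^+$ with $C_s>0$, so the $I_s$ are linearly independent and $M^0\not\equiv0$ as soon as some $a_s\ne0$. The goal is thus to prove that $\{I_0,\dots,I_r\}$ is an extended complete Chebyshev system on $(0,2)$: this at once gives the bound $r$.

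The unperturbed system is the potential system $H=\tfrac12 y^2+V(x)$ with the \emph{even} potential $V(x)=1-\cos x$, whose associated involution on $(-\pi,\pi)$ is $\sigma(x)=-x$. I would apply \THM\ (together with \LEM) to $I_0,\dots,I_r$, which reduces the Chebyshev property of $\{I_0,\dots,I_r\}$ on $(0,2)$ to that of an explicit auxiliary family $g_0,\dots,g_r$ --- built from $V$, $\sigma$ and the exponents $2s+1$ --- on an interval of the $x$-variable. After the classical substitution $u=\sin(x/2)$, $k^2=h/2$ (which, incidentally, also writes each $I_s$ as a polynomial in $h$ times the complete elliptic integrals $\mathcal{K},\mathcal{E}$ of the first and second kind), the $g_i$ become algebraic functions, and the Chebyshev property is verified by showing that the Wronskians $W[g_0,\dots,g_k]$, $k=0,\dots,r$, do not vanish on the corresponding interval. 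Using the Picard--Fuchs (hypergeometric) equation for $\mathcal{K},\mathcal{E}$ together with Legendre's relation to eliminate any elliptic integrals that reappear, each such Wronskian factors as an explicitly signed function times a polynomial $R_k$ in $k^2$ (equivalently in $h$) with rational coefficients, so the whole problem reduces to: $R_k(h)\ne0$ for every $h\in(0,2)$ and every $k=0,\dots,r$.

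For each fixed $k$ this is a decidable statement about a concrete polynomial, which I would certify by a Sturm sequence on $(0,2)$ (or by a positivity certificate after clearing denominators); carrying this out for $k=0,1,\dots,30$ completes the proof for $r\le30$. The sole obstacle is the uniformity of this last step: in every computed case $R_k$ keeps a constant sign on $(0,2)$, so I expect the conclusion for all $r$, but I have neither a closed form nor a workable recursion for the coefficients of $R_k$ with which to prove it; removing the hypothesis $r\le30$ appears to demand a structural, $k$-uniform argument for the non-vanishing of these Wronskians that I do not see at present.
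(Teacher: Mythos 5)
Your proposal is essentially the paper's own proof: write $M^0$ as a combination of $I_s(h)=\int_{\gamma_h}y^{2s+1}\,dx$, use Lemma~\ref{L-LEM} to raise every integrand to the common power $y^{2r+1}$ (so that the hypothesis $n<v+2$ of Theorem~\ref{T-THM} holds automatically, with $n=v=r+1$), and thereby reduce the ECT property of $(I_0,\dots,I_r)$ on $(0,2)$ to the non-vanishing on $(0,\pi)$ of the Wronskians of the auxiliary family $\big(1,\L[1],\dots,\L^{r}[1]\big)$, which the paper checks by explicit computation for each $r\le 30$ (for instance $W\big[1,\L[1],\L^2[1]\big]=4(\cos^6(x)+6\cos^4(x)+3\cos^2(x)+2)/\sin^9(x)$), leaving, exactly as you note, no uniform argument for arbitrary $r$. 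The one inaccuracy in your description of the final step is that these auxiliary Wronskians are rational functions of $\cos(x)$ and $\sin(x)$ in the variable $x$, not functions of $h$, and no elliptic integrals, Picard--Fuchs equation or Legendre relation ever enter: the finite verification is just that an explicit polynomial in $\cos^2(x)$ has no zeros on the relevant interval, which is even simpler than the certification you envisage.
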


We suspect that the proposition holds for all $r$, but our proof relies on huge explicit
computations showing  that the family
$$\Big\{\int_{\gamma_h}y^{2s+1}\, dx\Big\}_{s=0}^{r}$$ is an extended
complete Chebyshev system in $(0,2)$. We have performed them  only until $r=30.$\\

The next theorem gives bounds for the number of limit cycles bifurcating in rotatory region~$\mathcal{R}^\pm.$


\begin{thmx}
\label{T-thm outside}
Consider the system
    \begin{equation*}
      \label{E-penduSysGen4}
           \sys{y,}{-\sin( x)+ \ve \big(Q_{n}(x) y^{2p+1}+\sum_{s=0}^{r}Q_{n,s}(x)y^{2s}\big),}
     \end{equation*}
where $p,r\in\N$ and $Q_{n,s}$ and $Q_{n}$ are Fourier polynomials of degree $n$ and let
$M^{\pm}:(2,\infty)\longrightarrow \R$ be its associated first Melnikov functions on the rotary
regions $\mathcal{R}^\pm.$  Assume also that $M^{\pm}(h)$ is not identically zero.  Then it has at
most $n+r+1$ zeros in $(2,\infty)$, counting  multiplicity.  This bound is  optimal on each of the
regions $\mathcal{R}^-$ and $\mathcal{R}^+.$
 Moreover, this upper bound can be reduced to $r$ when
 $Q_{n}(x)\equiv0$ and to
 $n$   when $Q_{n,0}(x)\equiv
 Q_{n,1}(x)\equiv\cdots Q_{n,r}(x)\equiv0$.  These upper bounds
are also sharp on each of the regions
$\mathcal{R}^-$ and $\mathcal{R}^+.$
\end{thmx}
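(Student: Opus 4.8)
The plan is to follow the same strategy that underlies part (a) of Theorem A: use the symmetry of the vector field under $(x,y)\mapsto(-x,-y)$ (equivalently $(x,y)\mapsto(2\pi-x,-y)$ on the cylinder) to kill the odd-in-$x$ parts of the $Q_{n,s}$, so that only the even Fourier polynomials $Q_{n,s}^e$ (polynomials in $\cos x$) survive in the integrals over $\g_h^\pm$. After this reduction $M^\pm(h)$ becomes a linear combination of Abelian integrals of the two types
\[
\int_{\g_h^\pm}\cos^k(x)\,y^{2p+1}\,dx,\qquad k=0,\dots,n,
\]
and
\[
\int_{\g_h^\pm}\cos^k(x)\,y^{2s}\,dx,\qquad k=0,\dots,n,\ s=0,\dots,r.
\]
I would then invoke the representation of these Abelian integrals in terms of polynomials and the complete elliptic functions of first and second kind — precisely the machinery already set up in the paper for the proof of Theorem A (cf.\ the cited \LEM\ and \THMA, and Morozov's representation \cite{Morozov1989}). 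In the rotary region the natural modulus is $k^2=2/h$, and after the substitution the integrals $\int_{\g_h^+}y^{2s}\,dx$ and $\int_{\g_h^+}\cos^k(x)y^{2p+1}\,dx$ become, up to an explicit algebraic prefactor, polynomials in $k$ (or in $1/h$) times the complete elliptic integrals $K(k)$ and $E(k)$, possibly with a rational function of $k$ as well.

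The key structural point — and this is where I expect to have to be careful — is bookkeeping of degrees. One should show that, after clearing the common prefactor, $M^+(h)$ takes the form $A(h)+B(h)K(k)+C(h)E(k)$ (or the analogous form with $\sqrt{h}$ factors), where $A,B,C$ are polynomials in $1/h$ of controlled degree: the term $Q_n(x)y^{2p+1}$ contributes at most $n$ extra parameters/degrees and the terms $\sum Q_{n,s}(x)y^{2s}$ contribute at most $n+r$, but the parities force a collapse so that the total dimension of the relevant vector space of functions is $n+r+2$, giving at most $n+r+1$ zeros once one removes the (nonzero) prefactor. For the two sub-cases, when $Q_n\equiv 0$ the odd-power contribution disappears and the elliptic parts reorganize so that the count drops to $r$; when all $Q_{n,s}\equiv 0$ only the single term $Q_n(x)y^{2p+1}$ remains and a direct analysis of $\int_{\g_h^+}\cos^k(x)y^{2p+1}\,dx$ (a polynomial in $1/h$ of degree $n$ times a fixed elliptic factor, with no $K,E$ mixing) yields at most $n$ zeros. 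The main obstacle is precisely verifying that these Abelian integrals really do live in a vector space of the claimed dimension — i.e.\ proving the requisite linear independence/dimension count for the elliptic-function representation — rather than a naive overcount; recursion relations (Picard–Fuchs / integration by parts in $x$) for $\int_{\g_h^+}\cos^k(x)y^{2s}\,dx$ in $k$ are the tool to control this.

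For optimality on each of $\mathcal{R}^+$ and $\mathcal{R}^-$ separately, I would argue as follows. Having identified an explicit basis $\{f_0(h),\dots,f_{N}(h)\}$ (with $N=n+r+1$, or $r$, or $n$ in the three cases) for the space spanned by the $M^+$, it suffices to exhibit one choice of Fourier polynomials for which the corresponding linear combination has exactly $N$ zeros. The cleanest route is to check that the basis is an extended complete Chebyshev (ECT) system on $(2,\infty)$: then any prescribed number of zeros up to $N$ is realizable. If the full basis is not ECT one can still force $N$ simple zeros by a perturbation/Vandermonde argument near the endpoints $h\to 2^+$ and $h\to\infty$, using the distinct asymptotic orders of the $f_j$ there (one endpoint is the homoclinic loop $h=2$ where $K(k)$ blows up logarithmically, the other is $h\to\infty$ where $k\to 0$ and everything is analytic), which typically already separates the basis functions and lets one prescribe the sign pattern of $M^+$ at $N+1$ points. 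Since the system is symmetric, any configuration achieved in $\mathcal{R}^+$ is mirrored in $\mathcal{R}^-$, but because the coefficients of the perturbation are free one can also realize the extremal count in $\mathcal{R}^-$ independently; I would state this and note that the two regions are handled by the identical computation with $y<0$. The genuinely delicate part of the whole argument, to reiterate, is the dimension count in the elliptic representation — establishing that no further ``hidden'' linear relations among the $\int_{\g_h^\pm}\cos^k(x)y^{s}\,dx$ reduce the space below the stated size, which is needed for sharpness, while simultaneously making sure the upper bound is not exceeded.
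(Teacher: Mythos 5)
Your reduction to even Fourier parts is fine and matches the paper, and your identification of the relevant integrals $\int_{\g_h^+}\cos^k(x)y^{2p+1}dx$ and $\int_{\g_h^+}\cos^k(x)y^{2s}dx$ (the latter being polynomials in $h$ of degree $\le r$) is also correct. The gap is in the central step: you propose to bound the zeros through the elliptic representation $A(h)+\sqrt{h}\,(B(h)K(k)+C(h)E(k))$ plus a ``dimension count'' of the space spanned by these integrals. That cannot give the bound $n+r+1$. First, knowing that $M^+$ lies in a vector space of analytic functions of dimension $n+r+2$ says nothing by itself about the number of zeros of its nonzero elements; what is needed is precisely the Chebyshev property of that space, and your proposal contains no mechanism to establish it. Second, the only zero-counting tool available in the elliptic framework is Theorem 1 of \cite{Gasull2002}, whose bound grows with the degrees of the polynomial coefficients of $K$ and $E$; those degrees are $n+p$ (they grow with $p$, since $\int_{\g_h^+}\cos^k(x)y^{2p+1}dx=\sqrt{h}\,(P_{k+p}(h)K+Q_{k+p}(h)E)$), so this route reproduces bounds of the type in Theorem A, depending on $p$ and much larger than $n+r+1$; there is no ``collapse by parity'' that removes the $p$-dependence.

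The paper's actual proof uses a different key ingredient: Theorem A of \cite{Gasull2012b}, which states that $\big(J_{0,\alpha},\dots,J_{n,\alpha}\big)$ with $J_{i,\alpha}(y)=\int_a^b g^i(x)(1-yg(x))^{-\alpha}dx$ is an ECT-system. Taking $g(x)=\cos(x)$ and $\alpha=-(2p+1)/2$ shows that $\big(I_{0,2p+1},\dots,I_{n,2p+1}\big)$, with $I_{i,2p+1}(h)=\int_0^\pi\cos^i(x)(h-1+\cos x)^{(2p+1)/2}dx$, is an ECT-system on $(2,\infty)$, and the same holds for all their derivatives because $I_{i,2p+1}^{(j)}$ is a constant multiple of $I_{i,2p+1-2j}$. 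Then, writing $M^+(h)=\sum_{i=0}^r a_ih^i+\sum_{i=0}^n c_iI_{i,p}(h)$, one differentiates $r+1$ times to annihilate the polynomial part, applies the ECT property to $\varphi^{(r+1)}$ (at most $n$ zeros), and concludes with $r+1$ applications of Rolle's theorem, giving $n+r+1$; the two special cases and sharpness follow from the same ECT structure. If you want to salvage your proposal, you must replace the dimension-count step by this Chebyshev criterion (or an equivalent proof that the family $\big(1,h,\dots,h^r,I_{0,p},\dots,I_{n,p}\big)$ is an ECT-system); your endpoint-asymptotics fallback for sharpness is also not needed once the ECT property is in hand.
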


 When finishing this paper we became aware of the book \cite{Morozov} where similar questions are
treated in detail.  To compare these results with ours we apply the above theorems to the simple
example
\begin{equation}\label{E-example}
           \sys{y,}{-\sin( x)+ \ve \sum_{s=0}^{3}\big(a_s+b_s\sin(x)+c_s\cos (x)\big)y^s,}
     \end{equation}
where $a_s,b_s,c_s\in\R.$ In the notation of Theorem~\ref{T-thm bounds}, $n=1$ and $m=3.$ Using item (b) of Theorem~\ref{T-thm bounds}  we get that in the oscillatory region the maximum number of zeroes, counting multiplicity, of a
nonvanishing Melnikov function $M^0$ is $2n+ 2  E(( m  -1)/2)+1=5.$  Item $(b_1)$ of Theorem~\ref{T-thm inside} improves this upper bound.  Indeed, in the notation of Theorem~\ref{T-thm inside}, $n=1$, $s_1=0$ and $s_2=3$. Then
$r=o(0,3)=1, \ell=l(0,3)=1$ and the hypothesis $0\le r <  (\ell+3)/2=2$ holds. Thus, the maximum
number of zeros, counting  multiplicity, is $n+2r=3$. Moreover, since $r\le2$
and $n>0$, this upper bound is sharp.

Contrary to our findings, Theorem 4.10 of \cite[p. 135]{Morozov} asserts that for the general
system \eqref{E-penduSysGen} the number of zeros of $M^0$ in the oscillatory region
$\mathcal{R}^0$ is at most $n+E((m-1)/2)$. Applying his result to system~\eqref{E-example} gives
an upper bound of~$2.$ Therefore, our results show that Theorem 4.10 is not correct. We want to
point out that this can be seen directly  without using Theorem~\ref{T-thm inside} by choosing some parameters for
which the corresponding Melnikov function $M^0$ has at  least 3 zeros. In this
situation we have that in $\mathcal{R}^0$,
\[
M^0(h)=a_1  \int_{\g_h} y \, dx+ c_1  \int_{\g_h} \cos(x) y \, dx+ a_3 \int_{\g_h}  y^3 \, dx+ c_3
\int_{\g_h} \cos(x) y^3 \, dx,
\]
and there exist values $a_1,c_1,a_3$ and $c_3$ such that $M^0$ has at least 3  simple zeros in
$(0,2)$ since these four Abelian integrals are linearly independent.

\smallbreak

In fact, the line of arguments in the beginning of our proof of Theorem~\ref{T-thm bounds} is similar to the one of the proofs in
\cite{Morozov1989,Morozov}. The Abelian integral $I(h)$ associated to \eqref{E-penduSysGen} can be expressed, in the rotary and the oscillatory regions, in terms of polynomials and the complete elliptic functions of
first and second kind,
\begin{align}
\label{E-Elliptic}
    K(k) =  \int_0^{\pi/2}\frac{d\theta}{\sqrt{1-k^2\sin^2(\theta)}}, \quad
     E(k) =  \int_0^{\pi/2} \sqrt{1-k^2\sin^2(\theta)}\,d\theta,
\end{align}
satisfying certain recurrence relations.  We then use this result together
with an upper bound on the number of zeros of functions of the form $P(k)E(k)+Q(k)K(k)$ in $(-1,1)$, where $P$ and $Q$ are polynomials given  in~\cite{Gasull2002}.

In contrast, Morozov studied these functions directly by complexifying the variable $k$, and
applying the argument principle to a suitable domain. This method is, indeed,  the one used to prove the results in \cite{Gasull2002}.  So the inaccuracy of the  upper bounds given in \cite{Morozov} appears to originate from some of the steps in the analysis of these complexified functions.

The proofs of Theorems~\ref{T-thm inside} and~\ref{T-thm outside} are based on criteria developed
in~\cite{Grau2011} and~\cite{Gasull2012b}, respectively. Both proofs show that certain subfamilies
of Abelian integrals associated  to \eqref{E-penduSysGen}  form a Chebyshev system. 

\bigskip

The  paper is organized as follows:  In Section \ref{pa} we prove Theorem~\ref{T-thm bounds}, in
Section \ref{pb} we give the proof of Theorem \ref{T-thm inside}, while Section \ref{pc} addresses the proof of Theorem \ref{T-thm outside}.  Section \ref{SS-simultaneous} is devoted to
simultaneous bifurcation of limit cycles. Notice that our main results give bounds for the number
of zeros of the corresponding Abelian integrals on each of the  regions $\mathcal{R}^0$ and
$\mathcal{R}^\pm$ by studying  them separately. We end the paper with some comments and results
showing the difficulties of studying the coexistence of limit cycles in these three regions.
This problem has also been addressed briefly in~\cite{Morozov1989,Sanders1986} for
some particular cases of system~\eqref{E-penduSysGen}.


\section{Proof of Theorem A}\label{pa}
We begin by studying Abelian integrals of the type

\begin{equation}
\label{E-integrals1}
    \int_{\g_h} \cos^n(x) y^r \, dx,
\end{equation}
where $r,n\in\N$ and $\g_h\subset\{{y^2}/{2}+1-\cos(x)=h\}$. We denote
\begin{align}
\label{E-integrals2}
    I_{n,r} (h) & =   \int_0^{\al(h)} \cos^n(x) \left(\sqrt{h-1+\cos(x)}\right)^r \, dx,
\end{align}
where $\h\in(0,\infty)$ and the integration boundary is given by
\begin{equation*}
    \a(h)=  \left\{
                  \begin{array}{l l}
                        \arccos(1-h)  &\text{ for } \h\in(0,2),\\
                        \pi                  &  \text{ for } \h\in(2,\infty).
                  \end{array}\right.
\end{equation*}
Furthermore, we denote by $I^0_{n,r}$  and $I^+_{n,r}$ the restrictions of $I_{n,r}$ to the
intervals $(0,2)$ and $(2,\infty)$, respectively. Moreover, we denote $I^-_{n,r}=(-1)^r I^+_{n,r}$.
These
integrals coincide, except for a multiplicative constant with the corresponding Abelian integral
\eqref{E-integrals1}.

As we will see, the integrals $I_{n,r}(h)$ can be written in terms of the complete elliptic
integrals of first and second kind $K$ and $E$, see~\eqref{E-Elliptic}. Our computations to prove
this fact are inspired by \cite{Morozov1989} and use several well-known properties of
these elliptic functions, see \cite{Byrd,Gradshteyn1981}.

 To
prove this property for $I_{n,r}(h)$, it is essential that $K$
and $E$ are closed under derivation, that is,  expressions of the
form
\begin{equation}
\label{E-Elliptic Poly}
    f(k) K(k) + g(k) E(k),
\end{equation}
where $f$ and $g$ are rational functions in $k$, remain of this form
after differentiation with respect to $k$. This is due to the fact
that the elliptic functions $K$ and $E$ satisfy the
Picard-Fuchs equations
\begin{equation}
\label{E-Elliptic Diff}
    \frac{d K}{d k} = \frac{E-(1-k^2)K}{k(1-k^2)},\qquad
    \frac{d E}{d k} = \frac{E- K}{k},
\end{equation}
see \cite{Byrd},     formulas 710.00 and 710.02.
Once we are able to express the integrals \eqref{E-integrals2} in
terms of $E$ and $K$, we may use a result derived in
\cite{Gasull2002} which provides an upper bound on the number of
zeros of expressions of the form \eqref{E-Elliptic Poly}.

\begin{theorem}[Theorem 1 in \cite{Gasull2002}]
\label{T-thm Gasull Elliptic} Let $f$ and $g$ be real polynomials of
degree at most $n$ and $m$, respectively, and let $k\in(-1,1)$. An
upper bound for the number of zeros of the function $f(k) K(k) +
g(k) E(k)$, taking into account their multiplicity, is $n+m+2$.
\end{theorem}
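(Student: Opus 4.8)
The quantity to bound is $F(k):=f(k)K(k)+g(k)E(k)$ on $(-1,1)$, and the natural strategy is complex analytic. First I would use that $K$ and $E$ continue to single-valued holomorphic functions on the plane slit along the two rays, $\Omega:=\C\setminus\big((-\infty,-1]\cup[1,+\infty)\big)$, that they are real on $(-1,1)$, and that they satisfy the Schwarz reflection principle, so that $F$ extends holomorphically to $\Omega$ and obeys $F(\bar k)=\overline{F(k)}$. Since the real zeros of $F$ in $\Omega$ are exactly its zeros in $(-1,1)$, while its non-real zeros occur in conjugate pairs, it suffices to bound the total number of zeros of $F$ in $\Omega$, which I would extract from the argument principle, $\#\{\text{zeros in }\Omega\cap\{|k|<R\}\}=\frac1{2\pi}\Delta_{\Gamma_R}\arg F$, where $\Gamma_R$ is the boundary of $\Omega\cap\{|k|<R\}$ — the circle $|k|=R$ together with the four edges of the two slits — and then letting $R\to\infty$.

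Next I would pin down the behaviour of $F$ at the three ``ends'' of $\Omega$. At the branch points $k=\pm1$ the function $E$ stays finite ($E(\pm1)=1$) while $K$ diverges logarithmically, $K(k)\sim\tfrac12\log\!\big(16/(1-k^2)\big)$ as $k\to1$, and analogously at $-1$; hence near each endpoint $F$ has at worst a logarithmic singularity, and a small arc around $k=\pm1$ changes $\arg F$ by a bounded amount. At infinity the reciprocal-modulus transformations give that $E(k)$ grows linearly while $K(k)=O\big(k^{-1}\log k\big)$, so on $|k|=R$ large $F$ is dominated by $g(k)E(k)$; the circular part of $\Gamma_R$ therefore contributes roughly $m+1$ to the winding number (the degenerate case $g\equiv0$ being entirely analogous and easier, with $f(k)K(k)$ of lower order).

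The delicate step is the variation of $\arg F$ along the two slits. On the upper edge of $[1,+\infty)$ one computes, from the standard continuation formulas, that $\operatorname{Im}K(x+i0)$ and $\operatorname{Im}E(x+i0)$ are, up to sign and elementary factors, the complete elliptic integrals of the complementary modulus $\sqrt{1-1/x^{2}}$, which are smooth, finite and nonvanishing on $(1,\infty)$; consequently $\operatorname{Im}F(x+i0)$ is again a combination of the very same transcendental type, and the number of crossings of $\arg F$ through the negative real axis along the ray is controlled by the number of sign changes of this imaginary part. Because that count is a priori of the same nature as the one we are after, I would close the loop either by an induction on $\deg f+\deg g$ using the Picard--Fuchs relations \eqref{E-Elliptic Diff} — whereby, for instance, $k(1-k^2)F'=\tilde f\,K+\tilde g\,E$ for new polynomials $\tilde f,\tilde g$, so that Rolle's theorem on $(-1,1)$ trades a zero of $F$ for a zero of such a combination — or, equivalently, by analysing the monotonicity of the ratio $K/E$ on each interval through the Riccati equation it satisfies. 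The ray $(-\infty,-1]$ is then handled verbatim via the reflection symmetry.

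Finally I would add the contributions: roughly $m+1$ from the large circle, the ray contributions (expressed through $n$ and $m$), and the two bounded endpoint detours, to obtain $\frac1{2\pi}\Delta_{\Gamma_R}\arg F\le n+m+2$ in the limit $R\to\infty$, hence at most $n+m+2$ zeros of $F$ in $\Omega$, and a fortiori in $(-1,1)$, counted with multiplicity. The main obstacle is precisely the bookkeeping in the previous paragraph together with the exact value of the additive constant: one has to make the slit estimate genuinely non-circular and keep track of the endpoint and branch-point terms sharply enough that the bound is $n+m+2$ and not something larger. This extra unit over the naive Chebyshev count is unavoidable — the family $\{k^iK\}_{i\le n}\cup\{k^jE\}_{j\le m}$ cannot be an extended complete Chebyshev system, since already $K-E$ has a double zero at $k=0$ — and seeing exactly where it is spent is the real content of the argument.
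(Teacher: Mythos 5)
There is a genuine gap. Note first that the paper does not prove this statement at all: it is imported verbatim as Theorem~1 of \cite{Gasull2002}, and the only thing the paper says about its proof is that it proceeds by complexifying $k$ and applying the argument principle. Your skeleton is therefore aligned with the method behind the cited result, but it is only a skeleton: the decisive step, namely bounding the variation of $\arg F$ along the two cuts $(-\infty,-1]$ and $[1,+\infty)$ (which is precisely where the degree $n$ of $f$ must enter, since the large circle only accounts for roughly $m+1$), is left unproven, and you yourself observe that the quantity you would need to control there --- the number of sign changes of $\operatorname{Im}F(x+i0)$, which is again a polynomial combination of complete elliptic integrals of the complementary modulus --- is ``a priori of the same nature as the one we are after.'' That is the whole difficulty, and neither of the two escape routes you propose is carried out or would work as stated.

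Concretely, the Rolle--Picard--Fuchs induction does not close: from \eqref{E-Elliptic Diff} one gets $k(1-k^2)F'(k)=\tilde f(k)K(k)+\tilde g(k)E(k)$ with $\deg\tilde f\le n+2$ and $\deg\tilde g\le m+2$, so differentiation raises both degrees and there is no decreasing induction parameter; a naive Rolle argument then yields bounds that grow without limit rather than $n+m+2$. Similarly, ``monotonicity of $K/E$ via its Riccati equation'' would at best handle the case $n=m=0$ and is not a substitute for the cut estimate in general. Since the sharp additive constant $+2$ is exactly what is at stake (as your own example $K-E$, with its double zero at $k=0$, shows), a proof must make the slit contribution quantitative and non-circular; as written, your argument establishes the correct framework and the correct asymptotics at $k=\pm1$ and $k=\infty$, but not the theorem. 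If you want a complete argument you should either reproduce the analysis of \cite{Gasull2002} in detail or cite it, as the paper does.
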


The next Lemma shows that for $n=0,1$ the integrals $ I^0_{n,1}$ and $ I^+_{n,1}$ can be expressed
as combinations of $E$ and $K$ with polynomial coefficients.


\begin{lemma}
\label{L-Integrals 0 1} Denote $L^0_n(h)=I^0_{n,1}(h)$ and $L^+_n(h)=I^+_{n,1}(h)$,
see~\eqref{E-integrals2}. Then the following statements hold:
\begin{enumerate}[(A)]
\item
Let $h\in(0,2)$, then
\begin{align*}
        L^0 _0(h) & = \sqrt{2}\left((h-2)K\left(\sqrt{{h}/{2}}\right) +
        2 E\left(\sqrt{{h}/{2}}\right)\right)
\end{align*}
and
\begin{align*}
        L^0 _1(h) & = \frac{\sqrt{2}}{3}\left((2-h)K\left(\sqrt{{h}/{2}}\right)
        + 2(h-1) E\left(\sqrt{{h}/{2}}\right)\right).
\end{align*}

\item Let $h\geq 2$, then
\begin{align*}
        L^+_0(h) 
                      & = 2\sqrt{h} E\left(\sqrt{{2}/{h}}\right)
\end{align*}
and \begin{align*}
        L^+_1(h) &  = \frac{2}{3} \sqrt{h}
                        \left((2-h)K\left(\sqrt{{2}/{h}}\right) + (h-1)
                        E\left(\sqrt{{2}/{h}}\right)\right).
\end{align*}
\end{enumerate}
\end{lemma}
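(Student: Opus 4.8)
The plan is to reduce each of the four integrals to one of the standard forms in the tables of~\cite{Byrd} by an explicit trigonometric substitution, and then read off the answer. First I would treat the oscillatory case $h\in(0,2)$. Here $\alpha(h)=\arccos(1-h)$, so on the arc of integration $\cos(x)$ ranges from $1$ down to $1-h$, and $h-1+\cos(x)\ge 0$ with equality at $x=\alpha(h)$. The natural move is to write $1-\cos(x)=2\sin^2(x/2)$, so that $h-1+\cos(x)=h-2\sin^2(x/2)$, and then set $\sin(x/2)=\sqrt{h/2}\,\sin\theta$; as $x$ runs from $0$ to $\alpha(h)$, $\theta$ runs from $0$ to $\pi/2$. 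Under this substitution $dx = \sqrt{2h}\,\cos\theta\,(1-(h/2)\sin^2\theta)^{-1/2}\,d\theta$ and $\sqrt{h-1+\cos x}=\sqrt{h}\,\cos\theta$, so the factor $\cos\theta$ cancels and one is left with $L^0_0(h)=\sqrt{2h}\int_0^{\pi/2}(1-(h/2)\sin^2\theta)^{-1/2}\,d\theta$ — wait, that is exactly $\sqrt{2}\,\sqrt{h}\cdot$ something — more carefully one gets an integrand that is a rational function of $\sin^2\theta$ times $(1-k^2\sin^2\theta)^{\pm 1/2}$ with $k=\sqrt{h/2}$, which is precisely the shape handled by the reduction formulas 310–340 of~\cite{Byrd}. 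For $L^0_1(h)$ one has an extra factor $\cos^2(x)=(1-2\sin^2(x/2))^2 = (1-h\sin^2\theta)^2$, a polynomial of degree two in $\sin^2\theta$, so the same reduction applies and produces a $\Z[h]$-linear combination of $K(\sqrt{h/2})$ and $E(\sqrt{h/2})$.

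Next I would do the rotary case $h\ge 2$, where $\alpha(h)=\pi$ and $h-1+\cos(x)>0$ on all of $(0,\pi)$. Now the right substitution is the complementary one: write $h-1+\cos x = h - 2\sin^2(x/2)$ again but factor out $h$, i.e. $=h(1-(2/h)\sin^2(x/2))$, and set $\phi=x/2$ so that $\phi$ runs over $(0,\pi/2)$; then $\sqrt{h-1+\cos x}=\sqrt{h}\sqrt{1-k^2\sin^2\phi}$ with $k=\sqrt{2/h}$, $dx=2\,d\phi$, and $\cos x = 1-2\sin^2\phi$. Thus $L^+_0(h)=2\sqrt h\int_0^{\pi/2}\sqrt{1-k^2\sin^2\phi}\,d\phi = 2\sqrt h\,E(\sqrt{2/h})$ directly, with no reduction needed, and $L^+_1(h)=2\sqrt h\int_0^{\pi/2}(1-2\sin^2\phi)^2\sqrt{1-k^2\sin^2\phi}\,d\phi$; expanding the square and using $\sin^2\phi = (1-k^2\sin^2\phi)/k^2 - (1-k^2)/k^2$ (or equivalently the standard reduction of $\int\sin^{2j}\phi\,\sqrt{1-k^2\sin^2\phi}\,d\phi$, formulas 311–312 of~\cite{Byrd}) collapses everything to a $\Z[h]$-combination of $K(\sqrt{2/h})$ and $E(\sqrt{2/h})$. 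In both regimes the final simplification to the stated closed forms is bookkeeping: collect the coefficients of $K$ and $E$ and check they match, e.g. using $E(0)=K(0)=\pi/2$ at $h=2$ and a derivative check via the Picard–Fuchs relations~\eqref{E-Elliptic Diff} as an independent consistency test.

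An alternative, which I would mention as a cross-check rather than the primary route, is to avoid substitutions entirely: observe that $L^0_n$ and $L^+_n$ each satisfy a first-order linear ODE in $h$ (differentiating under the integral sign and integrating by parts in $x$, using that the integrand vanishes at the endpoint $x=\alpha(h)$ in the oscillatory case), match this against the ODE satisfied by the claimed right-hand side via~\eqref{E-Elliptic Diff}, and fix the one free constant by evaluating at a convenient $h$ (the limit $h\to 2$ is cleanest). The main obstacle is purely computational hygiene: keeping the half-angle substitution, the branch of the square root, and the orientation of $\gamma_h$ straight so that the signs and the rational prefactors $\sqrt{2}$, $\sqrt{2}/3$, $2\sqrt h$, $(2/3)\sqrt h$ come out exactly as stated — there is no conceptual difficulty, only the risk of an arithmetic slip, which is why the endpoint evaluation and the Picard–Fuchs derivative check are worth carrying along.
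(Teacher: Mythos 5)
Your route is correct in substance and overlaps heavily with the paper's. For the two integrals with $n=0$ you do exactly what the paper does: the substitution $\sin(x/2)=\sqrt{h/2}\,\sin\theta$ (equivalently $\xi=\arcsin\sqrt{(1-\cos x)/h}$) in the oscillatory region and the half-angle substitution $\phi=x/2$ in the rotary region, giving $L^+_0(h)=2\sqrt h\,E\bigl(\sqrt{2/h}\bigr)$ directly. Where you genuinely diverge is in the $n=1$ integrals: you propose to push the same substitutions through and reduce $\int \sin^{2j}\theta\,(1-k^2\sin^2\theta)^{\pm 1/2}\,d\theta$ by the standard table formulas, whereas the paper avoids any further reduction by first deriving the differential relation $L_1'(h)=\tfrac12 L_0(h)-(h-1)L_0'(h)$ from $(h-1)L_0+L_1=\int (h-1+\cos x)^{3/2}dx$, then making a polynomial Ansatz $a(h)K+b(h)E$, solving the resulting linear system via the Picard--Fuchs equations \eqref{E-Elliptic Diff}, and fixing the integration constant by the limit $h\to 2$. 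Your ``alternative cross-check'' is in fact precisely the paper's primary argument. Each route has its merit: yours is more elementary and self-contained (everything is an explicit table reduction in \cite{Byrd}), while the paper's Ansatz method is lighter on computation and foreshadows the inductive structure used later in Proposition 2.4.

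Two slips to clean up, neither fatal to the method. First, since $L_1=I_{1,1}$ (see \eqref{E-integrals2}), the extra factor in the integrand is $\cos(x)=1-2\sin^2(x/2)$ to the \emph{first} power, a degree-one polynomial in $\sin^2\theta$, not $\cos^2(x)=(1-2\sin^2(x/2))^2$ as you wrote in both regimes; followed literally, your bookkeeping would compute a different integral and fail the endpoint check you rightly propose. Second, in the oscillatory $L^0_0$ computation the factor $\cos\theta$ does not cancel: both $\sqrt{h-1+\cos x}=\sqrt h\cos\theta$ and $dx$ contribute one, leaving $\cos^2\theta=1-\sin^2\theta$ in the numerator (you half-correct this yourself); the resulting integrand is then handled either by the table reduction or, as in the paper, by recognizing $\int_0^{\pi/2}\sin^2\theta\,(1-(h/2)\sin^2\theta)^{-1/2}d\theta$ through $E'$ and \eqref{E-Elliptic Diff}.
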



\begin{proof}
(A)  The classical change of variables
\begin{equation*}
\label{E-cov}
    \xi = \arcsin\left(\sqrt{\frac{1-\cos(x)}{h}}\right),
\end{equation*}
see~\cite{Byrd}, allows us to rewrite the first integral $L^0 _0$ as
\begin{align*}
  L^0 _0&(h) =\int_0^{\arccos(1-h)}  \sqrt{h-1+\cos(x)} \, dx\\
                  &=\int_0^{\pi/2} \sqrt{h-h \sin^2(\xi)} \frac{\sqrt{2h}
                                             \cos(\xi)}{\sqrt{1-{h}\sin^2(\xi)/{2}}}d\xi
= \sqrt{2}h \int_0^{\pi/2} \frac{1-\sin^2(\xi)d\xi}{\sqrt{1-{h} \sin^2(\xi)/{2}}}
\end{align*}
Notice that
\begin{equation*}
\label{E-E'}
    E'\left(\sqrt{{h}/{2}} \right)\sqrt{{2}/{h}} = - \int_0^{\pi/2}
    \frac{\sin^2(\xi)d\xi}{\sqrt{1-{h} \sin^2(\xi)/{2}}},
\end{equation*}
and hence
\begin{equation*}
    K\left(\sqrt{{h}/{2}}\right) +  E'\left(\sqrt{{h}/{2}}\right)\sqrt{{2}/{h}}
        =\frac{L^0 _0(h)}{\sqrt{2}h}.
\end{equation*}
Moreover, in view of the fact that $E$ and $K$ satisfy the
differential equations \eqref{E-Elliptic Diff} we know that
    $E'\left(\sqrt{{h}/{2}}\right) =
    \frac{E(\sqrt{{h}/{2}})-K(\sqrt{{h}/{2}})}{\sqrt{{h}/{2}}}.$
Therefore,
\begin{align*}
    L^0 _0(h) &= \sqrt{2}h \left ( K(\sqrt{{h}/{2}}) + \sqrt{{2}/{h}}\, E'(\sqrt{{h}/{2}}) \right
    )\\
    &= \sqrt{2}h \left ( K(\sqrt{{h}/{2}}) +{2} \big(E(\sqrt{{h}/{2}})
                     -K(\sqrt{{h}/{2}})\big)/{h} \right )  \\
                   &= \sqrt{2} \big( (h-2)K(\sqrt{{h}/{2}}) + 2E(\sqrt{{h}/{2}}) \big),
\end{align*}
which proves the first assertion in (A). The first statement in (B)
is a straightforward calculation. Indeed,
\begin{align*}
    L^+_0(h) &=  \sqrt{h}\int_0^{\pi} \sqrt{1-(1-\cos(x))/h } \,dx=
    \sqrt{h}\int_0^{\pi} \sqrt{1-{2}\sin^2\left({x}/{2}\right)/{h}}\, dx\\
                    &= 2\sqrt{h}\int_0^{\pi/2} \sqrt{1-{2}\sin^2(\theta)/{h}}\,
                     d\theta = 2\sqrt{h} E\left(\sqrt{{2}/{h}}\right).
\end{align*}
To show the second statements in (A) and (B), we make a general
observation which is true in both cases, that is, in the oscillatory
as well as the rotary region. We drop the superscripts for lighter
notation, and notice that in view of
\begin{equation*}
    (h-1) L_0(h) + L_1(h)= \int (h-1+\cos(x))^{3/2}\, dx
\end{equation*}
we find that $((h-1) L_0(h) + L_1(h))' = \frac{3}{2} L_0(h)$ and
therefore
\begin{equation}
\label{E-I1 in terms of L0'}
    L_1'(h)=\frac{1}{2} L_0(h) - (h-1) L_0'(h).
\end{equation}
To prove the second statement of (A) we proceed by making an Ansatz of the form $S^0 (h)=a(h)
K\left(\sqrt{{h}/{2}}\right) + b(h) E\left(\sqrt{{h}/{2}}\right)$, where $a(h)$ and $b(h)$ are
real polynomials in $h$. Differentiating this expression and equating it with the right-hand side
of \eqref{E-I1 in terms of L0'}, we obtain a linear system of differential equations in $a(h)$ and
$b(h)$. Comparing coefficients of $K$ and $E$ we obtain the solution $a(h) =\frac{\sqrt{2}}{3}
(2-h)$ and $b(h) =\frac{\sqrt{2}}{3}2  (h-1)$. To make sure that the corresponding solution $S^0
(h)= \frac{\sqrt{2}}{3} \left((2-h)K\left(\sqrt{{h}/{2}}\right) + 2
(h-1)E\left(\sqrt{{h}/{2}}\right)\right)$ is the correct one, it suffices to show that $\lim_{h\to
2^-} S^0 (h)- L^0 _1(h)=0.$ This holds because
$$\begin{array}{ll}
\lim_{h\to 2^-}\left|(2-h)K\left(\sqrt{{h}/{2}}\right)\right|
        &=\lim_{h\to 2^-}\sqrt{(2-h)}\int_0^{\pi/2}\sqrt{2}\sqrt{\frac{2-h}{2-\sin^2\theta}}\,d\theta\\
        &\le \lim_{h\to 2^-}\sqrt {2}\sqrt{2-h}\frac{\pi}{2}=0
\end{array}$$
and therefore $\lim_{h\to 2^-}S^0 (h)= 2\frac{\sqrt {2}}{3}E(1) =2\frac{\sqrt {2}}{3}.$ Moreover,
a simple computation shows that
$$
   \lim_{h\to 2} L^0 _1(h)  = \int_0^{\pi} \sqrt{1+ \cos(x)}\cos(x)\, dx\\
                       = 2\frac{\sqrt {2}}{3}.
$$
To prove the second statement in (B)  we proceed as in case (A), and make an Ansatz of the form
$S^+(h)=\sqrt{h}\left(a(h) K\left(\sqrt{{2}/{h}}\right) + b(h)
E\left(\sqrt{{2}/{h}}\right)\right)$. We then solve the corresponding system of differential
equations and obtain the solution $a(h)=\frac{2}{3} (2-h)$ and $b(h)=\frac{2}{3} (h-1)$. As above,
simple computations show that $ \lim_ {h\to 2^+}S^+(h)- L^+_1(h) = 0.$ This ends the proof of the
Lemma.

\end{proof}


\begin{lemma}
\label{L-integral indu} For any positive real number $r> 0$ and any
$m\in\N$ it holds that
    \begin{equation}
    \label{E-int ind}
         \int t^m(t+s)^r dt=(t+s)^{r+1} V_m(t,s),
     \end{equation} where
    \begin{equation*}
        V_m(t,s)=\frac{1}{r+m+1} \Big( t^m - m s V_{m-1}(t,s)\Big),
    \end{equation*}
   is a homogeneous polynomial of degree $m$ with $V_0(t,s)={1}/{(r+1)}$ and
    $V_1(t,s)=\frac{1}{r+2}\left (t - s \frac{1}{r+1}\right)$.
\end{lemma}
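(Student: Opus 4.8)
The plan is to argue by induction on $m$ and to verify the closed form by differentiation rather than by manipulating the integral directly. Since $r$ need not be an integer, I would first note that everything takes place on an interval where $t+s>0$, so that $(t+s)^r$ and $(t+s)^{r+1}$ make sense, and that the assertion is to be read as: the function $(t+s)^{r+1}V_m(t,s)$ is an antiderivative of $t^m(t+s)^r$ in the variable $t$, with $s$ treated as a parameter.

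For the base case $m=0$ I would simply compute $\int(t+s)^r\,dt=\frac{1}{r+1}(t+s)^{r+1}$, which is $(t+s)^{r+1}V_0(t,s)$ with $V_0(t,s)=1/(r+1)$, a homogeneous polynomial of degree $0$. The displayed value of $V_1$ is then just the $m=1$ instance of the recursion, $V_1=\frac{1}{r+2}\big(t-sV_0\big)=\frac{1}{r+2}\left(t-\frac{s}{r+1}\right)$.

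For the inductive step I would assume the statement for $m-1$: that $V_{m-1}(t,s)$ is a homogeneous polynomial of degree $m-1$ and that $\frac{d}{dt}\big[(t+s)^{r+1}V_{m-1}\big]=t^{m-1}(t+s)^r$. Dividing the latter by $(t+s)^r$ yields the equivalent polynomial identity $(r+1)V_{m-1}+(t+s)\,\partial_t V_{m-1}=t^{m-1}$. Now define $V_m$ by the stated recursion; since $t^m$ and $sV_{m-1}$ are homogeneous of degree $m$, so is $V_m$. Differentiating $(t+s)^{r+1}V_m$ and pulling out the common factor $(t+s)^r$, the claim $\frac{d}{dt}\big[(t+s)^{r+1}V_m\big]=t^m(t+s)^r$ reduces to showing $(r+1)V_m+(t+s)\,\partial_t V_m=t^m$. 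Substituting the recursion for $V_m$ and for $\partial_t V_m$, and then using the induction hypothesis to replace $(t+s)\,\partial_t V_{m-1}$ by $t^{m-1}-(r+1)V_{m-1}$, all terms containing $V_{m-1}$ cancel and one is left with $\frac{1}{r+m+1}\big((r+1)t^m+mt^m\big)=t^m$, as required.

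I do not expect a genuine obstacle here: the only point needing a little care is the bookkeeping in the cancellation of the $V_{m-1}$-terms, together with the observation that, because the statement concerns an indefinite integral, a differentiation argument is precisely what is called for and the constant of integration plays no role.
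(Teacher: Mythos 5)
Your proof is correct, and it takes a genuinely different (if closely related) route from the paper's. The paper works with the integral itself: it sets $U_m(t,s)=\int t^m(t+s)^r\,dt$, integrates by parts to obtain the recurrence $U_m=\frac{1}{r+m+1}\big(t^m(t+s)^{r+1}-m\,s\,U_{m-1}\big)$ (in the inductive step this appears in the slightly self-referential form where $U_{m+1}$ occurs on both sides before being solved for), and then identifies $U_m$ with $(t+s)^{r+1}V_m$ by induction. You instead take the stated closed form and verify by differentiation that $(t+s)^{r+1}V_m$ is an antiderivative of $t^m(t+s)^r$, reducing the claim to the polynomial identity $(r+1)V_m+(t+s)\,\partial_t V_m=t^m$, which you prove inductively from the recursion defining $V_m$; the cancellation of the $V_{m-1}$-terms indeed leaves $\frac{1}{r+m+1}\big((r+1)+m\big)t^m=t^m$, so the step is sound, and homogeneity is handled the same trivial way in both arguments. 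What your approach buys is elementarity and rigor about the indefinite integral: no integration by parts is needed, the constant-of-integration issue is addressed explicitly by reading the statement as ``the right-hand side is an antiderivative'' (the paper leaves this implicit), and the restriction $t+s>0$ needed for non-integer $r$ is spelled out. What the paper's approach buys is that it derives the recursion for $V_m$ from the integral rather than merely verifying a formula given in advance, which is how the lemma would be discovered; as a verification of the stated lemma, however, your argument is complete.
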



\begin{proof}
Let us denote  $V_m=V_m(t,s)$ and
    \begin{equation*}
        U_m=U_m(t,s)=\int t^m(t+s)^r dt,
    \end{equation*}
for $m\in\N$. Integrating by parts and rearranging the terms we find
that
\begin{equation*}
    U_m =\frac{1}{r+m+1}\Big(t^m(t+s)^{r+1} - m s U_{m-1}\Big).
\end{equation*}
Now the claim follows by induction. Indeed, a direct calculation
shows  that $U_0=\frac{1}{r+1} (t+s)^{r+1}= (t+s)^{r+1} V_0$  and
similarly $U_1= (t+s)^{r+1} V_1$. Now assume that statement
\eqref{E-int ind} holds for all $i\leq m$. Then, integrating by
parts we find that
\begin{align*}
    U_{m+1} 
                &= t^{m+1}\frac{(t+s)^{r+1}}{r+1} - \frac{m+1}{r+1}\left(U_{m+1} + s U_m\right),
\end{align*}
which yields
\begin{align*}
    U_{m+1} &= \frac{1}{m+r+1}\left(t^{m+1} (t+s)^{r+1} - (m+1) s U_m\right)\\
                &= \frac{1}{m+r+1}\left(t^{m+1} (t+s)^{r+1} - (m+1) s (t+s)^{r+1} V_m\right)\\
                &= (t+s)^{r+1}\frac{1}{m+r+1}\left(t^{m+1}  - (m+1) s  V_m\right)\\
                &= (t+s)^{r+1}V_{m+1},
\end{align*}
where we have used the induction hypothesis in the second equality
of the above expression.
\end{proof}

Now we are ready to prove the desired expression for $I_{n,1}(h)$ with any
$n\in\N$.


\begin{proposition}
\label{P-Ln Elliptic}
    Denote $L^0_n(h)=I^0_{n,1}(h)$ and $L^+_n(h)=I^+_{n,1}(h)$, see~\eqref{E-integrals2}.
    Then there exist real polynomials  $P^0_n,P^+_n, Q^0_n$ and $Q^+_n$ of degree $n\in\N$
     such that
\begin{align*}
 L^0 _n(h)&=  P^0 _n(h) K\left(\sqrt{{h}/{2}}\right)
  + Q^0 _n(h) E\left(\sqrt{{h}/{2}}\right)\quad \mbox{when} \quad h\in(0,2),\\
 L^+_n(h)&= \sqrt{h} \left(P^+_n(h) K\left(\sqrt{{2}/{h}}\right)
+ Q^+_n(h) E\left(\sqrt{{2}/{h}}\,\right)\right) \quad \mbox{when} \quad h\in(2,\infty).
\end{align*}
\end{proposition}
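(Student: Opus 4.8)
The plan is to argue by induction on $n$, taking the cases $n=0,1$ from Lemma~\ref{L-Integrals 0 1} and using a linear recurrence that expresses $L_{n+2}$ through $L_{n+1}$, $L_n$ and $L_{n-1}$ with coefficients that are polynomials in $h$ of degree at most one. The key observation is that the set of functions of the form $P(h)K(\sqrt{h/2})+Q(h)E(\sqrt{h/2})$ with $P,Q\in\R[h]$ (and, in the rotary region, of the form $\sqrt{h}\,(P(h)K(\sqrt{2/h})+Q(h)E(\sqrt{2/h}))$) is a module over $\R[h]$, so once such a recurrence is available the claimed representation propagates automatically; keeping track of how the degrees of $P$ and $Q$ change along the recurrence then gives the degree bound.

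To derive the recurrence, write $L_n$ for either $L^0_n$ or $L^+_n$, so that $L_n(h)=\int_0^{\alpha(h)}\cos^n(x)\,(h-1+\cos x)^{1/2}\,dx$ with $\alpha(h)=\arccos(1-h)$ on $(0,2)$ and $\alpha(h)=\pi$ on $(2,\infty)$. Differentiating the function $x\mapsto \sin(x)\cos^n(x)\,(h-1+\cos x)^{3/2}$ and using $\sin^2x=1-\cos^2x$ together with $(h-1+\cos x)^{3/2}=(h-1+\cos x)\,(h-1+\cos x)^{1/2}$, one rewrites the derivative as a linear combination, with coefficients affine in $h$, of the integrands $\cos^k(x)(h-1+\cos x)^{1/2}$ for $k=n-1,\dots,n+2$. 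Integrating over $[0,\alpha(h)]$ the boundary terms vanish: at $x=0$ because $\sin 0=0$, and at $x=\alpha(h)$ either because $\sin\pi=0$ (rotary region) or because $h-1+\cos\alpha(h)=0$ (oscillatory region, where it is precisely the exponent $3/2$ that annihilates the endpoint contribution). This yields
\[
\Big(n+\tfrac52\Big)L_{n+2}(h)=\Big(n+\tfrac32\Big)L_n(h)+n(h-1)L_{n-1}(h)-(n+1)(h-1)L_{n+1}(h),
\]
valid on both $(0,2)$ and $(2,\infty)$, with the $L_{n-1}$ term absent when $n=0$.

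The induction then closes easily. For $n=0$ and $n=1$ the required representations, with polynomials of degree at most $1$, are exactly the content of parts (A) and (B) of Lemma~\ref{L-Integrals 0 1}. Assuming the statement for all indices $\le n+1$, the displayed identity and the module structure give the representation for $L_{n+2}$; since multiplication by $h-1$ raises the degree of a polynomial by at most one and division by the nonzero constant $n+\tfrac52$ does not change degrees, one checks inductively that the polynomials attached to $L_n$ have degree at most $n$ (the single exception being the degree-one polynomial $\sqrt2(h-2)$ occurring in $L^0_0$). In the rotary region the argument is word for word the same, the common factor $\sqrt h$ being simply carried along through the recurrence.

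The only step that needs genuine care is the verification that the total-derivative identity leaves no surviving boundary contribution in either region simultaneously — this is exactly what dictates the choice of exponent $3/2$ rather than $1/2$ in the primitive — together with the routine but slightly fiddly bookkeeping of polynomial degrees along the recurrence. Everything else reduces to formal manipulations inside the $\R[h]$-module and to the base cases already established in Lemma~\ref{L-Integrals 0 1}.
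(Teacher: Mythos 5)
Your proof is correct: the four-term recurrence you derive is right (expanding the derivative of $\sin(x)\cos^n(x)(h-1+\cos x)^{3/2}$ and integrating over $[0,\alpha(h)]$ indeed gives $\bigl(n+\tfrac52\bigr)L_{n+2}=\bigl(n+\tfrac32\bigr)L_n+n(h-1)L_{n-1}-(n+1)(h-1)L_{n+1}$, with all boundary terms vanishing in both regions), and combined with the base cases of Lemma~\ref{L-Integrals 0 1} and the $\R[h]$-module structure it yields the claimed representation with the stated degree bounds. The overall strategy — induction from $n=0,1$ plus a recurrence obtained by integration by parts — is the same as the paper's, but your route to the recurrence is genuinely different: the paper first writes $L_n=L_{n-2}-\int\cos^{n-2}(x)\sqrt{h-1+\cos x}\,\sin^2(x)\,dx$, invokes the auxiliary antiderivative Lemma~\ref{L-integral indu} (with $r=1/2$, $s=h-1$), and ends up expressing $L_n$ as a combination of \emph{all} lower-index integrals $L_1,\dots,L_{n-1}$ with coefficients that are powers of $h-1$, whereas you bypass that lemma entirely and get a fixed-length recurrence with explicit coefficients affine in $h$, valid verbatim in both regions. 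Your version makes the degree bookkeeping more transparent (each step raises the degree by at most one), and you correctly flag the only genuine anomaly, the degree-one polynomial in $L^0_0$, which the paper's statement glosses over as well. One small side remark: it is not the exponent $3/2$ that makes the endpoint contribution vanish in the oscillatory region (the exponent $1/2$ would vanish there too); rather, $3/2$ is needed so that differentiation does not produce integrands with the exponent $-1/2$, which would leave the family $\{L_k\}$ — this does not affect the validity of your argument.
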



\begin{proof}
Lemma \ref{L-Integrals 0 1} proves the result  for $n=0,1.$ Now we
claim that for $n>1$ we have
\begin{equation*}
    L_n(h) =a_{1}(h) L_{n-1}(h) +  a_{2}(h) L_{n-2}(h) + \dots +  a_{n-1}(h)L_1(h),
\end{equation*}
where $a_i(h)$ are polynomials with degree $i$. Note that
\begin{align*}
    L_n(h) &=\int_0^{\al(h)} \cos^n(x) \sqrt{h-1+\cos(x)} \, dx \\
               &=L_{n-2}(h) -  \int_0^{\al(h)} \cos^{n-2}(x) \sqrt{h-1+\cos(x)} \sin^2(x)\, dx.
\end{align*}
We want to perform integration by parts in the second integral. To
this end, we use Lemma \ref{L-integral indu} with $r=1/2$ and
$s=h-1$ to obtain
\begin{equation*}
    \int \cos^{m}(x) \sqrt{h-1+\cos(x)} \sin(x)\, dx = \sqrt{h-1+\cos(x)}\, P_{m+1}(\cos(x),h-1),
\end{equation*}
where $P_{m+1}$ is homogeneous of degree $m+1$ with $P_{m+1}(z,0) =
\frac{2}{3+2m} z^{m+1}.$ Therefore, we have that
\begin{align*}
    L_n(h) &=L_{n-2}(h) + \int_0^{\al(h)} \sqrt{h-1+\cos(x)}\, P_{n-1}(\cos(x),h-1) \cos(x)\, dx\\
              &=L_{n-2}(h) + \int_0^{\al(h)}\sqrt{h-1+\cos(x)}
                                      \sum_{i=0}^{n-1} b_i \cos^i(x) (h-1)^{n-i-1} \cos(x)\, dx\\
              &=L_{n-2}(h) + \sum_{i=0}^{n-1} b_i (h-1)^{n-i-1}
                                      \int_0^{\al(h)}\sqrt{h-1+\cos(x)} \cos^{i+1}\, dx,\\
              &=L_{n-2}(h) + \sum_{i=0}^{n-1} b_i (h-1)^{n-i-1} L_{i+1}(h),
\end{align*}
where $b_{n-1} = \frac{2}{2n-1}.$ Therefore we get
$$L_n(h)=\frac{2n-1}{2n-3}\left(L_{n-2}(h) + \sum_{i=0}^{n-2} b_i (h-1)^{n-i-1}
L_{i+1}(h)\right)$$ and the claim is proved. Now the proposition
follows directly by induction.
\end{proof}


\begin{lemma}
\label{L-differentiating P K + Q E}
    Let $k^2={2}/{h}$ and for $h\in (2,\infty)$ consider
    \begin{equation*}
    \label{E-Lambda}
        \Lambda_m(h):= \sqrt{h}\big(P_m(h) K(k) + Q_m(h) E(k)\big),
    \end{equation*}
    where $P_m,Q_m$ are real polynomials of degree $m$.
    Then, the $n^{th}$-derivative of this expression is given by
    \begin{equation*}
    \label{E-Lambda Diff}
        \Lambda_m^{(n)}(h) = \frac{1}{(h(h-2))^n}\sqrt{h}\big(P_{m+n}(h) K(k) + Q_{m+n}(h) E(k)\big),
    \end{equation*}
    for all $n\in\N$,  where $P_{m+n}$ and $Q_{m+n}$ are real polynomials of degree $m+n$.
\end{lemma}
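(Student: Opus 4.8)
The plan is to prove the formula by induction on $n$; the case $n=0$ is simply the definition of $\Lambda_m$. The one piece of analytic input is the Picard--Fuchs system~\eqref{E-Elliptic Diff}, which I would first rewrite in terms of $h$ rather than $k$. Since $k=\sqrt{2/h}$ gives $dk/dh=-\tfrac1{\sqrt2}\,h^{-3/2}$ and $1-k^2=(h-2)/h$, substituting into~\eqref{E-Elliptic Diff} yields
\[
\frac{d}{dh}K(k)=\frac{K(k)}{2h}-\frac{E(k)}{2(h-2)},\qquad \frac{d}{dh}E(k)=\frac{K(k)-E(k)}{2h}.
\]
These two identities are the heart of the argument: they show that the class of functions $\sqrt h\bigl(f(h)K(k)+g(h)E(k)\bigr)$ with $f,g$ rational is closed under $d/dh$, and that differentiating such a function introduces at most one new factor of $h$ and one new factor of $h-2$ in the denominators.

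For the inductive step, suppose $\Lambda_m^{(n)}(h)=(h(h-2))^{-n}\sqrt h\bigl(A(h)K(k)+B(h)E(k)\bigr)$ with $\deg A,\deg B\le m+n$. Differentiating the scalar factor $(h(h-2))^{-n}$ contributes $-n(2h-2)(h(h-2))^{-n-1}\sqrt h\bigl(AK+BE\bigr)$. Differentiating the remaining part $\sqrt h(AK+BE)$ by the product rule together with the two identities above gives, after simplification, $(h(h-2))^{-n}\sqrt h$ times
\[
\frac{2hA'+2A+B}{2h}\,K(k)+\frac{2(h-2)B'-A}{2(h-2)}\,E(k),
\]
and I would note here that the coefficient of $K$ has denominator only $2h$ and that of $E$ only $2(h-2)$, the spurious factors having cancelled. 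Writing both contributions over the common denominator $2\,(h(h-2))^{n+1}$ and collecting, one obtains $\Lambda_m^{(n+1)}(h)=(h(h-2))^{-(n+1)}\sqrt h\bigl(P_{m+n+1}(h)K(k)+Q_{m+n+1}(h)E(k)\bigr)$ with
\[
P_{m+n+1}=\tfrac12\bigl((h-2)(2hA'+2A+B)-2n(2h-2)A\bigr),\quad Q_{m+n+1}=\tfrac12\bigl(h(2(h-2)B'-A)-2n(2h-2)B\bigr).
\]
Since $hA'$, $(h-2)B'$, $(2h-2)A$ and $(2h-2)B$ all have degree at most $m+n+1$, so do $P_{m+n+1}$ and $Q_{m+n+1}$, which closes the induction.

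The proof is in essence a bookkeeping exercise; the only point that needs genuine care—and the step I would call the main obstacle—is the simultaneous degree-and-denominator count. One must check that the $\tfrac1{2\sqrt h}$ produced by differentiating $\sqrt h$ and the $\tfrac1{2(h-2)}$ sitting inside $dK/dh$ combine so that, after clearing denominators, exactly one extra power of $h(h-2)$ appears and no more, and that differentiating the degree-$(m+n)$ polynomials $A$ and $B$ does not push the degrees of $P_{m+n+1},Q_{m+n+1}$ past $m+n+1$. Both are confirmed by the explicit simplification of the $K$- and $E$-coefficients displayed above, so no essential difficulty remains.
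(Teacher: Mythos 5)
Your proof is correct and follows essentially the same route as the paper, which disposes of the lemma in one line ("obviously true for $n=0$; the result follows by induction using the Picard--Fuchs equations \eqref{E-Elliptic Diff}"); you have simply carried out the induction explicitly, and your rewritten identities $\frac{d}{dh}K(k)=\frac{K(k)}{2h}-\frac{E(k)}{2(h-2)}$, $\frac{d}{dh}E(k)=\frac{K(k)-E(k)}{2h}$ and the resulting recursion for $P_{m+n+1},Q_{m+n+1}$ check out, including the degree and denominator bookkeeping.
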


\begin{proof}

The equality is obviously true for $n=0.$ The result follows
directly by induction using  \eqref{E-Elliptic Diff}.

\end{proof}


\begin{proposition}
\label{P bound outside}
    Let $h\in(2,\infty),\,\, k^2={2}/{h}$
and $n,s,r\in \N.$ Then there exist polynomials  $Z_s,P_{n+r}$ and $Q_{n+r}$ of degrees $s$ and
$n+r$ such that $I^+_{n,2s}(h)= Z_s(h)$ and $I^+_{n,2r+1}(h)=\sqrt{h} \left(P_{n+r}(h) K(k)+
Q_{n+r}(h) E(k)\right).$ Moreover,  any nontrivial function of the form
\[
\tilde Z_s(h)+\sqrt{h} \left(\tilde P_{n+r}(h) K(k)+ \tilde Q_{n+r}(h) E(k)\right),
\]
where $\tilde Z_s,\tilde P_{n+r}$ and $\tilde Q_{n+r}$ are also polynomials with respective degrees
$s,n+r$ and $n+r$, has at most  $2(n+r)+3s+4$ zeros, counting multiplicity.
\end{proposition}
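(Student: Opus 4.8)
The plan is to establish the two representation formulas by a direct computation, and then to bound the number of zeros by differentiating away the polynomial summand, changing variables, and combining Theorem~\ref{T-thm Gasull Elliptic} with an even-symmetry argument.

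For the representations, recall that $\alpha(h)=\pi$ for $h>2$. Expanding $(h-1+\cos x)^{s}=\sum_{j=0}^{s}\binom{s}{j}(h-1)^{s-j}\cos^{j}(x)$ and integrating term by term, each $\int_{0}^{\pi}\cos^{n+j}(x)\,dx$ is a constant, so $I^{+}_{n,2s}(h)$ is a polynomial $Z_{s}(h)$ of degree at most $s$. For the odd power I would write $(\sqrt{h-1+\cos x}\,)^{2r+1}=(h-1+\cos x)^{r}\sqrt{h-1+\cos x}$, expand the $r$-th power in the same way, and obtain $I^{+}_{n,2r+1}(h)=\sum_{j=0}^{r}\binom{r}{j}(h-1)^{r-j}L^{+}_{n+j}(h)$. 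By Proposition~\ref{P-Ln Elliptic}, $L^{+}_{n+j}(h)=\sqrt{h}\,\bigl(P^{+}_{n+j}(h)K(k)+Q^{+}_{n+j}(h)E(k)\bigr)$ with polynomials of degree $n+j$; multiplying by $(h-1)^{r-j}$ produces coefficient polynomials of degree at most $n+r$, so collecting the coefficients of $K(k)$ and of $E(k)$ yields the asserted form with $P_{n+r},Q_{n+r}$ of degree at most $n+r$.

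For the zero count, set $F(h)=\tilde Z_{s}(h)+\sqrt{h}\,\bigl(\tilde P_{n+r}(h)K(k)+\tilde Q_{n+r}(h)E(k)\bigr)$ with $k=\sqrt{2/h}$, assume $F\not\equiv 0$, and let $N$ be the number of zeros of $F$ in $(2,\infty)$ counted with multiplicity. The key idea is to differentiate $s+1$ times in order to remove the polynomial part. If $F^{(s+1)}\equiv 0$ then $F$ is a polynomial of degree at most $s$ on $(2,\infty)$ and $N\le s$; so assume $F^{(s+1)}\not\equiv 0$. Since $\tilde Z_{s}^{(s+1)}\equiv 0$, Lemma~\ref{L-differentiating P K + Q E} (with polynomial degree $n+r$ and derivative order $s+1$) gives
\[
F^{(s+1)}(h)=\frac{\sqrt{h}}{\bigl(h(h-2)\bigr)^{s+1}}\bigl(\bar P(h)K(k)+\bar Q(h)E(k)\bigr),
\]
with $\bar P,\bar Q$ of degree at most $n+r+s+1$ and not both zero; the prefactor is smooth and nowhere zero on $(2,\infty)$, so the zeros of $F^{(s+1)}$ there coincide, with multiplicity, with those of $\bar P(h)K(\sqrt{2/h})+\bar Q(h)E(\sqrt{2/h})$.

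Finally, I would perform the change of variable $h=2/k^{2}$, an orientation-reversing diffeomorphism of $(0,1)$ onto $(2,\infty)$, which preserves the number of zeros counted with multiplicity, and then multiply by the nowhere-vanishing factor $k^{2(n+r+s+1)}$. This reduces the problem to bounding the number of zeros in $(0,1)$ of $g(k):=\hat P(k)K(k)+\hat Q(k)E(k)$, where $\hat P(k)=k^{2(n+r+s+1)}\bar P(2/k^{2})$ and $\hat Q(k)=k^{2(n+r+s+1)}\bar Q(2/k^{2})$ are polynomials of degree at most $2(n+r+s+1)$ containing only even powers of $k$, hence even and not both zero. Here is the step that makes the estimate sharp enough: since $K$ and $E$ are even functions of $k$, $g$ is even, so each zero of $g$ in $(0,1)$ is mirrored by a zero of the same multiplicity in $(-1,0)$; thus if $g$ has $\mu$ zeros in $(0,1)$ it has at least $2\mu$ in $(-1,1)$, and Theorem~\ref{T-thm Gasull Elliptic} bounds the latter by $2\cdot 2(n+r+s+1)+2$, whence $\mu\le 2(n+r+s+1)+1$. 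Combining this with the Rolle-type inequality $N\le\mu+(s+1)$ gives $N\le 2(n+r)+3s+4$. I expect the only real difficulty to be bookkeeping: tracking multiplicities consistently through the $(s+1)$-fold differentiation, through the change of variable, and through the symmetrization $k\mapsto-k$; in particular, the factor of $2$ gained from the evenness of $K$ and $E$ is essential, since without it the leading term would be $4(n+r)$ rather than $2(n+r)$.
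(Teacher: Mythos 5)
Your proposal is correct and follows essentially the same route as the paper: the same expansion of $(h-1+\cos x)^{r}$ combined with Proposition~\ref{P-Ln Elliptic} for the representations, then $(s+1)$-fold differentiation via Lemma~\ref{L-differentiating P K + Q E}, the substitution $k^2=2/h$ with the evenness/symmetry halving, Theorem~\ref{T-thm Gasull Elliptic}, and Rolle. Your explicit treatment of the degenerate case $F^{(s+1)}\equiv 0$ is a minor bookkeeping addition the paper leaves implicit, but the argument is the same.
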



\begin{proof}
For lighter notation we drop the superscripts and observe that
\begin{align*}
    I_{n,2s}(h) &= \int_0^{\pi}\cos^n(x) \left(\sqrt{h-1+\cos(x)}\right)^{2s} \, dx \\
                    &= \sum_{i=0}^s c_i (h-1)^{s-i} \int_0^{\pi} \cos^{i+n}(x) \, dx=Z_s(h),
\end{align*}
where $Z_s$ is a real polynomial of degree $s$. Furthermore,
\begin{align*}
    I_{n,2r+1}(h) &= \int_0^{\pi}\cos^n(x) \left(\sqrt{h-1+\cos(x)}\right)^{2r+1} \, dx \\
                        &= \sum_{i=0}^r d_i (h-1)^{r-i} \int_0^{\pi}
                        \cos^{i+n}(x)\sqrt{h-1+\cos(x)} \, dx\\
                        &= \sqrt{h} \left(P_{n+r}(h) K\left(\sqrt{{2}/{h}}\right)+
                         Q_{n+r}(h) E\left(\sqrt{{2}/{h}}\right)\right),
\end{align*}
in view of Proposition \ref{P-Ln Elliptic}. Thus, the first statement
of the Proposition is proved. Differentiating the above expression $s+1$ times using
Lemma \ref{L-differentiating P K + Q E}, we obtain
$$ \begin{array}{ll}
I_{n,2r+1}^{(s+1)}(h)&=\frac{\sqrt{h}}{(h(h-2))^{s+1}}\big(P_{n+r+s+1}(h) K(k)
+ Q_{n+r+s+1}(h) E(k)\big)\\ &=
\frac{\sqrt{h}}{(h(h-2))^{s+1}}\big(P_{n+r+s+1}(\frac{2}{k^2}) K(k)
+ Q_{n+r+s+1}((\frac{2}{k^2})) E(k)\big).
\end{array}$$
Thus, any zero of $I_{n,2r+1}^{(s+1)}(h)$ corresponds to a positive zero of
$$ P_{n+r+s+1}\left({2}/{k^2}\right) K(k) + Q_{n+r+s+1}\left({2}/{k^2}\right)
E(k),$$
which is also a positive zero of
$$P_{2(n+r+s+1)}(k) K(k) + Q_{2(n+r+s+1)}(k)E(k),$$
for certain even polynomials  $P_{2(n+r+s+1)}$ and $Q_{2(n+r+s+1)}$ of degree $2(n+r+s+1).$
By Theorem \ref{T-thm Gasull Elliptic} we know that the number of
zeros of this last expression in $(-1,1)$ is bounded by $4(n+r+s+1)
+2.$ Since the expression is even, we obtain that the number of
zeros of $I_{n,2r+1}^{(s+1)}(h)$ in $(2,\infty)$ is bounded by $2(n+r+s+1) +1$,
and obtain the desired result applying Rolle's Theorem $s+1$ times.
\end{proof}


\begin{proposition}
\label{P bound inside}
 Let $h\in(0,2),\,\,k^2={h}/{2}$
 and consider integrals of the form
 \begin{equation*}
         I_{n,2r+1}^0 (h) = \int_0^{\arccos(1-h)} \cos^n(x)
         \left(\sqrt{h-1+\cos(x)}\right)^{2r+1} \, dx.
 \end{equation*}
 Then  $I_{n,2r+1}^0 (h)=P_{n+r}(h) K(k)+ Q_{n+r}(h) E(k)$ for
 certain polynomials $P_{n+r}$ and $Q_{n+r}$ of degree $n+r.$
 Moreover, the number of zeros of
 $I_{n,2r+1}^0 (h)$, counting multiplicity, is less than or equal to
$2(n+r)+1.$
\end{proposition}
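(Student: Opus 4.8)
The plan is to follow the strategy of the proof of Proposition~\ref{P bound outside}, adapted to the oscillatory region, where the relevant modulus satisfies $k^2=h/2$ instead of $k^2=2/h$.

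\emph{Step 1 (representation in terms of $K$ and $E$).} I would first write $\big(\sqrt{h-1+\cos x}\,\big)^{2r+1}=(h-1+\cos x)^r\sqrt{h-1+\cos x}$ and expand $(h-1+\cos x)^r=\sum_{i=0}^r d_i\,(h-1)^{r-i}\cos^i x$ by the binomial theorem, obtaining
\[
 I^0_{n,2r+1}(h)=\sum_{i=0}^r d_i\,(h-1)^{r-i}\!\int_0^{\arccos(1-h)}\!\cos^{i+n}(x)\sqrt{h-1+\cos x}\,dx=\sum_{i=0}^r d_i\,(h-1)^{r-i}\,L^0_{i+n}(h).
\]
By Proposition~\ref{P-Ln Elliptic}, each $L^0_{m}(h)=P^0_m(h)K(k)+Q^0_m(h)E(k)$ with $\deg P^0_m=\deg Q^0_m=m$. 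Substituting and collecting the coefficients of $K(k)$ and $E(k)$, the coefficient of $K(k)$ equals $\sum_{i}d_i(h-1)^{r-i}P^0_{i+n}(h)$, a polynomial of degree at most $\max_i\{(r-i)+(i+n)\}=n+r$, and the same holds for the coefficient of $E(k)$. This yields $I^0_{n,2r+1}(h)=P_{n+r}(h)K(k)+Q_{n+r}(h)E(k)$ with $\deg P_{n+r},\deg Q_{n+r}\le n+r$, which is the first assertion.

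\emph{Step 2 (counting zeros).} Unlike in Proposition~\ref{P bound outside}, there is no polynomial summand to eliminate, so Rolle's theorem is not needed. Since $k^2=h/2$, i.e.\ $h=2k^2$, defines a diffeomorphism from $(0,1)$ onto $(0,2)$ (its derivative $4k$ is nonzero there), it transports zeros with their multiplicities; under it $I^0_{n,2r+1}$ becomes
\[
 G(k):=P_{n+r}(2k^2)K(k)+Q_{n+r}(2k^2)E(k)=\widetilde P_{2(n+r)}(k)K(k)+\widetilde Q_{2(n+r)}(k)E(k),
\]
where $\widetilde P_{2(n+r)}$ and $\widetilde Q_{2(n+r)}$ are \emph{even} polynomials of degree at most $2(n+r)$. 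As $K$ and $E$ depend only on $k^2$, the function $G$ is even on $(-1,1)$, and it is not identically zero (for $0<h<1$ one has $[0,\arccos(1-h)]\subset[0,\pi/2]$ and the integrand of $I^0_{n,2r+1}$ is nonnegative, strictly positive on a nonempty subinterval, hence $I^0_{n,2r+1}(h)>0$). By Theorem~\ref{T-thm Gasull Elliptic}, $G$ has at most $2(n+r)+2(n+r)+2=4(n+r)+2$ zeros in $(-1,1)$, counting multiplicity. Being even and nontrivial, the zeros of $G$ in $(-1,1)$ consist of a zero at $0$ of even (possibly zero) multiplicity together with matched pairs $\pm k_0$, $k_0\in(0,1)$, of equal multiplicity; hence $G$ has at most $\tfrac12\big(4(n+r)+2\big)=2(n+r)+1$ zeros in $(0,1)$, counting multiplicity. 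Transporting this back along $h=2k^2$ gives the bound $2(n+r)+1$ for the number of zeros of $I^0_{n,2r+1}$ on $(0,2)$, counting multiplicity.

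\emph{Main obstacle.} There is no real conceptual difficulty here: the argument is essentially careful bookkeeping, and the two spots that need attention are (i) checking that multiplying the degree-$(i+n)$ polynomials supplied by Proposition~\ref{P-Ln Elliptic} by $(h-1)^{r-i}$ does not raise the total degree above $n+r$, and (ii) the evenness/halving step—namely that passing from $(-1,1)$ to $(0,1)$ divides the zero count by at least two, which relies on $G$ being even and on its zero at $k=0$ having even multiplicity, both of which are immediate once $G$ is written purely through $k^2$, $K$ and $E$. Since $h=2k^2$ is a smooth bijection with nonvanishing derivative on $(0,1)$, the transfer of the bound back to the $h$-variable preserves multiplicities and is harmless.
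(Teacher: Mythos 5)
Your proof is correct and follows essentially the same route as the paper: expand $(h-1+\cos x)^r$ to reduce to the functions $L^0_{n+i}$ of Proposition~\ref{P-Ln Elliptic}, collect into $P_{n+r}(h)K(k)+Q_{n+r}(h)E(k)$, substitute $h=2k^2$ to obtain even polynomials of degree $2(n+r)$, and then halve the bound $4(n+r)+2$ from Theorem~\ref{T-thm Gasull Elliptic} by the evenness of the resulting function on $(-1,1)$. Your extra remarks (nontriviality of the integral for small $h$ and preservation of multiplicities under $h=2k^2$) are harmless refinements of the same argument.
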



\begin{proof}
For $r=0$,  we know that $I_{n,1}^0 (h)=L_n^0 (h)= P_{n}(\frac{2}{k^2}) K(k) +
Q_{n}(\frac{2}{k^2}) E(k)$, in view of Proposition \ref{P-Ln Elliptic}. For $r\geq 1$ we find that
\begin{align*}
    &I_{n,2r+1}^0 (h) = \int_0^{\arccos(1-h)} \cos^n(x)\left(\sqrt{h-1+\cos(x)}\right)^{2r+1}  \, dx\\
                        &= \sum_{i=0}^r a_i (h-1)^{r-i}
                                 \int_0^{\arccos(1-h)} \cos^{n+i}(x)\sqrt{h-1+\cos(x)} \, dx\\
                        &= \sum_{i=0}^r a_i (h-1)^{r-i}  I_{n+i,1}^0 (h)= \sum_{i=0}^{r}a_i
                        (h-1)^{r-i}\left (P_{n+i}(h) K(k) + Q_{n+i}(h) E(k) \right)\\
                    &=P_{n+r}(h) K(k)+ Q_{n+r}(h) E(k)=P_{n+r}(2k^2) K(k)+ Q_{n+r}(2k^2) E(k)\\
                    &=P_{2(n+r)}(k) K(k)+ Q_{2(n+r)}(k) E(k),
\end{align*}
for certain even polynomials $P_{2(n+r)}$ and $Q_{2(n+r)}$ of
degree $2(n+r).$ In view of Theorem \ref{T-thm Gasull Elliptic} we
conclude that $P_{2(n+r)}(k) K(k)+ Q_{2(n+r)}(k) E(k)$ has at most
$4(n+r)+2$ zeros in $(-1,1)$. Since this expression is even we conclude that it
has at most $2(n+r)+1$ positive zeros and the result follows.
\end{proof}


\begin{proof}[Proof of Theorem \ref{T-thm bounds}]

We prove item (a) for $M^+.$ The proof for $M^-$ follows in the same
way. To prove the first statement  it suffices to show that for any
$j,i\in\N$ we have
$$\int_{\gamma_h}\sin (jx)y^{i}\, dx\equiv 0$$ on $(2,\infty).$ This
holds because $\sin (jx)\left(\sqrt{h-1+\cos(x)}\right)^{i}$ is an odd function
and
$$\int_{\gamma_h}\sin (jx)y^{i}\, dx=
    \int_{-\pi}^{\pi}\sin(jx)\left (\sqrt{h-1+\cos(x)}\right)^{i} \, dx =0.
$$
Now applying Proposition \ref{P bound outside} we obtain that
$$M^+(h)=Z_s(h)+\sqrt{h} \left(P_{n+r}(h) K(k)+ Q_{n+r}(h)
E(k)\right)$$ for some polynomials $Z_s, P_{n+r}$ and $Q_{n+r}$ of degree $s$ and $n+r.$ Here $s$
and $r$ are the largest natural numbers such that $2s\le m$ and $2r+1\le m$, respectively. That
is, $s=E(\frac{m}{2})$ and $r=E(\frac{m-1}{2}).$ From Proposition \ref{P bound outside} we obtain
that the number of zeros of $M^+(h)$ in $(2,\infty)$ is bounded by
$$2n+2E\left(\frac{m-1}{2}\right)
+3E\left(\frac{m}{2}\right)+4=2n+2m+E\left(\frac{m}{2}\right)+2.$$ To prove  the first statement
of item $(b)$ we note that for any $h\in (0,2)$,  for any $i\in \N$ and for any smooth function
$f$ we have that $\int_{\gamma_h}f(x)y^{2i}\, dx =0.$ This is a direct consequence of the symmetry
with respect the $x$-axis of the orbit $\gamma_h$ and Green's Theorem. Indeed, we have
$$\int_{\gamma_h}f(x)y^{2i}\, dx=\iint_{Int(\gamma_h)}2if(x)y^{2i-1}\, dx\, dy=0.$$
To finish the proof of the first statement we need to show that for any $j,i\in\N$ we have
$$\int_{\gamma_h}\sin (jx)y^{2i+1}\, dx\equiv 0$$ on $(0,2).$ Again  this a consequence of Green's
Theorem and the symmetry (this time with respect to the $y$-axis) of the orbit $\gamma_h$, since
$$\int_{\gamma_h}\sin(jx)y^{2i+1}\, dx=\iint_{Int(\gamma_h)}(2i+1)\sin(jx)y^{2i}\, dx\, dy=0.$$
Lastly, setting $k=\sqrt{{h}/{2}}$ we obtain  from Proposition \ref{P bound inside} that
$$M^0(h)=P_{n+r}(h) K(k)+ Q_{n+r}(h) E(k)$$
for  certain polynomials $P_{n+r}$ and $Q_{n+r}$ of degree $n+r.$ Now $r$ is the largest integer
satisfying $2r+1\le m$, that is, $r=E(\frac{m-1}{2}).$ Then, using again Proposition~\ref{P bound
inside} we obtain that the number of zeros of $M^0(h)$ in $(0,2)$ is bounded by
$2\left(n+E(\frac{m-1}{2})\right) +1.$ This ends the proof of Theorem \ref{T-thm bounds}.
\end{proof}


\section{Proof of Theorem B}\label{pb}

We start with some  definitions and known results.

\begin{definition}

Let $f_0,f_1, \dots f_{n-1}$ be analytic functions on an open interval~$L$.
\begin{enumerate}[(a)]
    \item  $(f_0,f_1, \dots f_{n-1})$ is a \emph{Chebyshev system} (T-system) on $L$ if
    any nontrivial linear combination
    \begin{equation*}
            \alpha_0 f_0(x)+ \dots + \alpha_{n-1} f_{n-1}(x)
    \end{equation*}
             has at most $n-1$ isolated zeros on $L$.
    \item  $(f_0,f_1, \dots f_{n-1})$ is a \emph{complete Chebyshev system} (CT-system) on $L$ if
               $(f_0,f_1, \dots f_{k-1})$ is a T-system for all $k=1,2,\dots n$.
    \item  $(f_0,f_1, \dots f_{n-1})$ is an \emph{ extended complete Chebyshev system} (ECT-system)
    on $L$ if, for all $k=1,2,\dots n$, any nontrivial linear combination
    \begin{equation*}
            \alpha_0 f_0(x)+ \dots + \alpha_{k-1} f_{k-1}(x)
    \end{equation*}
             has at most $k-1$ isolated zeros on $L$ counting multiplicity.
\end{enumerate}

 \end{definition}

It is clear that if $(f_0,f_1, \dots f_{n-1})$ is an ECT-system on $L$, then it is also a
CT-system on $L$.
However, the reverse implication is not true in general.
In order to show that a set of functions is a T-system, the notion of the \emph{Wronskian}
proves to be extremely useful.

\begin{definition}
Let $f_0,f_1, \dots f_{k-1}$ be analytic functions on an open interval $L$.
The \emph{continuous Wronskian} of $(f_0,f_1, \dots f_{k-1})$ at $x\in L$ is
\begin{equation*}
    W[f_0,\dots f_{k-1}](x)=det\bigg(f_j^{(i)}(x)\bigg )_{0\leq i,j\leq k-1}
    \end{equation*}
The \emph{discrete Wronskian} of $(f_0,f_1, \dots f_{k-1})$ at $(x_0,\dots, x_{k-1})\in L^k$ is
\begin{equation*}
    D[f_0,\dots f_{k-1}](x_0,\dots,x_{k-1})=det\bigg(f_j(x_i)\bigg)_{0\leq i,j\leq k-1}
\end{equation*}
\end{definition}

For the sake of brevity we use the shorthand $x_0,x_1,\dots, x_{k-1} = \mb{x_k}$. Recall that
if the functions $f_i$
are linearly dependent, so are the columns of $W$ and therefore $W[\mb{f_k}]=0$. The reverse
implication is not true in general.
However, if the $f_i$ are analytic then the vanishing of $W$ implies linear dependence (This
is due to Peano, and there are other,
 more sophisticated criteria due Bocher, Wollson,  etc.). The next result is well-known,
 cf.~\cite{KarlinStudden}.

\begin{lemma}
The following equivalences hold:
\begin{enumerate}[(a)]
    \item $(f_0,f_1, \dots f_{n-1})$ is a CT-system on $L$ if and only if for all  $k=1,2,\dots n$
    \begin{equation*}
        D[\mb{f_k}](\mb{x_k}) \neq 0
            \text{ for all } \mb{x_k} \in L^k \text{ such that } x_i\neq x_j \text{ for } i\neq j.
    \end{equation*}
    \item $(f_0,f_1, \dots f_{n-1})$ is an ECT-system on $L$ if and only if for all  $k=1,2,\dots n$
    \begin{equation*}
        W[\mb{f_k}](x) \neq 0
            \text{ for all } x \in L.
    \end{equation*}
\end{enumerate}

\end{lemma}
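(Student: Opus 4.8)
The plan is to prove the two equivalences separately, since part~(a) is essentially a linear-algebra fact whereas part~(b) requires an induction that couples a Wronskian reduction identity with Rolle's theorem. Throughout I will use the standing convention (implicit in the definition, and in any case forced by either nonvanishing hypothesis) that the functions under discussion are linearly independent, so that every nontrivial linear combination is genuinely $\not\equiv 0$ and, by analyticity, has only isolated zeros.

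For part~(a), the first step is the observation that a nontrivial combination $g=\sum_{j=0}^{k-1}\alpha_j f_j$ vanishes at $k$ pairwise distinct points $x_0,\dots,x_{k-1}\in L$ exactly when $(\alpha_0,\dots,\alpha_{k-1})$ is a nonzero kernel vector of the matrix $\big(f_j(x_i)\big)_{0\le i,j\le k-1}$, that is, exactly when $D[\mb{f_k}](\mb{x_k})=0$. From this, if all discrete Wronskians on distinct tuples are nonzero then, for each $k$, no nontrivial combination of $f_0,\dots,f_{k-1}$ can vanish at $k$ distinct points, so each truncation $(f_0,\dots,f_{k-1})$ is a T-system and $(f_0,\dots,f_{n-1})$ is a CT-system; conversely, a vanishing $D[\mb{f_k}](\mb{x_k})$ at distinct points manufactures a nontrivial combination of $f_0,\dots,f_{k-1}$ with $k$ distinct zeros, contradicting the T-system property of that truncation. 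This direction is routine once phrased this way.

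For part~(b), the direction ``ECT $\Rightarrow$ all $W\neq 0$'' is short: a vanishing $W[\mb{f_k}](x_0)$ means the columns of the Wronskian matrix are dependent at $x_0$, yielding a nonzero $(\alpha_0,\dots,\alpha_{k-1})$ with $g^{(i)}(x_0)=0$ for $0\le i\le k-1$, so $g=\sum_j\alpha_j f_j$ has a zero of multiplicity at least $k$ at $x_0$, which is impossible for an ECT-system. The substantive direction, ``all $W\neq 0$ $\Rightarrow$ ECT'', I would establish by induction on $k$, the case $k=1$ being immediate since $W[f_0]=f_0$ is nowhere zero. For the inductive step take a nontrivial $g=\sum_{j=0}^{k-1}\alpha_j f_j$; if $\alpha_{k-1}=0$ it is a nontrivial combination of $f_0,\dots,f_{k-2}$, whose sub-Wronskians are among the given ones, and the induction hypothesis bounds its zeros by $k-2$. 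If $\alpha_{k-1}\neq 0$, then $f_0$ is nowhere zero (since $W[f_0]\neq 0$), so $g$ and $g/f_0$ share zeros with multiplicity, and $(g/f_0)'=\sum_{j=1}^{k-1}\alpha_j h_j$ with $h_j:=(f_j/f_0)'$ is a nontrivial combination of the $k-1$ functions $h_1,\dots,h_{k-1}$. The Wronskian reduction identity $W[f_0,f_1,\dots,f_{\ell-1}]=f_0^{\,\ell}\,W[h_1,\dots,h_{\ell-1}]$ for $2\le\ell\le k$ makes all sub-Wronskians of $(h_1,\dots,h_{k-1})$ nonvanishing, so by induction $(g/f_0)'$ has at most $k-2$ zeros counting multiplicity; then Rolle's theorem --- in the refined form that a zero of multiplicity $\mu$ of an analytic $u\not\equiv 0$ is a zero of multiplicity $\mu-1$ of $u'$ and $u'$ has a zero strictly between any two consecutive zeros of $u$ --- shows that $k$ zeros of $g/f_0$ counting multiplicity would force at least $k-1$ zeros of $(g/f_0)'$, a contradiction; hence $g/f_0$, and therefore $g$, has at most $k-1$ zeros counting multiplicity.

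The hardest part will be this last direction of~(b): I will need to prove (or cite precisely) the Wronskian reduction identity $W[f_0,\dots,f_{\ell-1}]=f_0^{\,\ell}\,W[h_1,\dots,h_{\ell-1}]$, and above all to make the multiplicity bookkeeping in the Rolle step airtight --- namely the clean lemma ``if $u$ is analytic with $N$ zeros counting multiplicity on an interval, then $u'$ has at least $N-1$'' --- together with the harmless but necessary remark that it suffices to derive the contradiction from any $k$ prescribed zeros, so that the argument localizes correctly on an open, possibly unbounded, interval $L$. Part~(a) and the forward implication of~(b) should go through with no essential difficulty beyond keeping track of the linear-independence convention.
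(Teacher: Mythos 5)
Your proposal is correct. Note, however, that the paper does not prove this lemma at all: it is stated as a well-known fact with a citation to Karlin and Studden, so there is no ``paper proof'' to compare against. What you have written is essentially the classical argument from that literature: part (a) is the linear-algebra equivalence between vanishing of the discrete Wronskian at distinct nodes and the existence of a nontrivial combination with $k$ distinct zeros; part (b) combines the easy direction (a vanishing continuous Wronskian produces a combination with a zero of multiplicity $\ge k$) with the standard induction for the converse, using the reduction identity $W[f_0,\dots,f_{\ell-1}]=f_0^{\ell}\,W[(f_1/f_0)',\dots,(f_{\ell-1}/f_0)']$ together with the Rolle-type count that an analytic $u\not\equiv 0$ with $N$ zeros counting multiplicity has a derivative with at least $N-1$. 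Both of the ingredients you flag as needing care are standard and provable exactly as you indicate (the reduction identity follows from factoring $f_0$ out of each column and expanding along the first column; the Rolle count follows from summing $\mu_i-1$ over multiplicities plus the $s-1$ interlacing zeros). Your remark about the linear-independence convention is also the right way to handle the only genuine subtlety, namely that an identically vanishing nontrivial combination would make the ``only if'' directions fail under a purely literal reading of ``isolated zeros''; with that convention (which is the one the cited reference uses) the equivalences hold as stated.
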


To study the limit cycles of  equation  \eqref{E-pendu} in the
oscillatory region $\mathcal{R}^0$ we  will repeatedly use a result
introduced by Grau, Ma\~{n}osas and Villadelprat in \cite{Grau2011},
which we state in Theorem \ref{T-THM} below. It allows one to deduce
Chebyshev properties for certain Abelian integrals from the
Chebyshev properties of the corresponding integrands. We state here
this result for the particular case of potential even systems. To
fix notation, consider $V$ an analytic even function defined in a
neighborhood of the origin that has a local non degenerate minimum
at $0$, and  assume  that $V(0)=0.$ That is, $V$ satisfies
$V(x)=V(-x),\,\,V(0)=V'(0)=0$ and $V''(0)>0.$ Consider the
associated Hamiltonian system given by $H(x,y)={y^2}/{2}+V(x)$. Then
the origin of $\R^2$ is a critical point of center type and there
exists a punctured neighborhood $\P$, the so-called {\it period
annulus}, of the origin which is foliated by ovals $ \gamma_h
\subset \{H(x,y)=h\}$. Thus, the set of ovals inside the period
annulus can be parametrized by the energy levels $h\in (0,h_0)$ for
some $h_0\in (0,\infty]$ and the projection on the $x$-axis of the
period annulus is a symmetric interval $(-x_r,x_r)$ with
$V(x_r)=h_0.$  The following result plays a key role
 in our analysis.

\begin{theorem}[\THM]
\label{T-THM}
Let us consider the Abelian integrals
\begin{equation*}
    I_i(h)=\int_{\gamma_h}f_i(x)y^{2v-1}\, dx, \quad i=0,1,\dots, n-1,\; v\in\N,
\end{equation*}
where for each $h\in(0,h_0)$, $\gamma_h$ is the oval surrounding the
origin inside the level curve $\{{y^{2}}/{2}+V(x)=h\}$ where $V$ is
an analytic even function with $V(0)=V'(0)=0,\,\,V'(x)> 0$ for all
$x\in (0,x_r),$ and  $V''(0)>0.$ Define
\begin{equation*}
    \ell_i(x)=\frac12\left(\frac{f_i(x)}{V'(x)}-\frac{f_i(-x)}{V'(-x)}\right).
\end{equation*}
Then $(I_0,\dots, I_{n-1})$ is an ECT-system on $(0,h_0)$ if $(\ell_0,\dots, \ell_{n-1})$ is a
CT-system on $(0,x_r)$ and $n<v+2$.
\end{theorem}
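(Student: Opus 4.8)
The strategy is to rewrite each Abelian integral $I_i$ as a Riemann--Liouville fractional integral of a fixed transform of $\ell_i$, and then to exploit the fact that fractional integration upgrades a Chebyshev system to an extended one. Let $\rho\colon(0,h_0)\to(0,x_r)$ be the analytic diffeomorphism inverse to the restriction of $V$ to $(0,x_r)$, so that $\rho(0^{+})=0$. I claim there is a constant $c_v>0$ depending only on $v$ with
\begin{equation*}
   I_i(h)=c_v\int_0^{h}\ell_i\bigl(\rho(u)\bigr)\,(h-u)^{\,v-1/2}\,du,\qquad h\in(0,h_0),
\end{equation*}
for every $i=0,\dots,n-1$. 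Introducing $\mathcal{J}_\alpha g(h):=\int_0^h g(u)(h-u)^{\alpha-1}\,du$, this reads $(I_0,\dots,I_{n-1})=c_v\bigl(\mathcal{J}_{v+1/2}(\ell_0\circ\rho),\dots,\mathcal{J}_{v+1/2}(\ell_{n-1}\circ\rho)\bigr)$. Since precomposition with the diffeomorphism $\rho$ is a bijective change of variable, it preserves the zero count of every function, so $(\ell_0\circ\rho,\dots,\ell_{n-1}\circ\rho)$ is a CT-system on $(0,h_0)$ whenever $(\ell_0,\dots,\ell_{n-1})$ is one on $(0,x_r)$. Hence everything reduces to the assertion that $\mathcal{J}_\alpha$ sends a CT-system of size $n$ to an \emph{ECT}-system of size $n$ provided $\alpha>n-1$; applied with $\alpha=v+\tfrac12$, the inequality $\alpha>n-1$ is exactly $n<v+2$.

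To prove the displayed identity I would parametrise $\gamma_h$ by $y=\pm\sqrt{2(h-V(x))}$; the two half-arcs contribute equally, so $I_i(h)=2\int_{-\rho(h)}^{\rho(h)}f_i(x)\bigl(2(h-V(x))\bigr)^{v-1/2}\,dx$. Because $V$ is even, the weight $\bigl(2(h-V(x))\bigr)^{v-1/2}$ is an even function of $x$; folding the interval onto $[0,\rho(h)]$ therefore replaces $f_i$ by twice its even part $f_i^{e}$. Since $V'$ is odd one checks immediately that $\ell_i(x)=f_i^{e}(x)/V'(x)$, whence $I_i(h)=4\int_0^{\rho(h)}\ell_i(x)\,V'(x)\bigl(2(h-V(x))\bigr)^{v-1/2}\,dx$, and the substitution $u=V(x)$ (legitimate because $V'>0$ on $(0,x_r)$) yields the claimed formula with $c_v=2^{\,v+3/2}$. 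The only delicate point is convergence: $\ell_i$ generically has a simple pole at $0$, so $\ell_i\circ\rho$ grows like $u^{-1/2}$ near $u=0$; this is harmless against the bounded factor $(h-u)^{v-1/2}$ there, and near $u=h$ the factor $(h-u)^{v-1/2}$ is integrable because $v-\tfrac12>-1$.

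The core of the argument is the fractional-integration lemma, which I would isolate and prove on its own (it is of the same nature as Lemma~4.1 in \cite{Grau2011}). For the \emph{completeness} part, the kernel $(h-u)^{\alpha-1}$, extended by zero for $u>h$, is totally positive, cf.~\cite{KarlinStudden}, and an integral operator with a totally positive kernel does not decrease the Chebyshev property; hence $(\mathcal{J}_\alpha g_0,\dots,\mathcal{J}_\alpha g_{n-1})$ is again a CT-system. For the \emph{extended} part one must strengthen this to the non-vanishing of the continuous Wronskians $W[\mathcal{J}_\alpha g_0,\dots,\mathcal{J}_\alpha g_{k-1}]$ on $(0,h_0)$ for every $k\le n$. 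Differentiating under the integral sign, $\tfrac{d}{dh}\mathcal{J}_\alpha g=(\alpha-1)\mathcal{J}_{\alpha-1}g$ plus a boundary contribution that vanishes precisely when $\alpha>1$; iterating, the $j$-th derivative of $\mathcal{J}_\alpha g$ is a nonzero constant multiple of $\mathcal{J}_{\alpha-j}g$ with no surviving boundary terms as long as $\alpha>j$. A $k\times k$ Wronskian needs derivatives of order up to $k-1\le n-1$, which is exactly where $\alpha>n-1$ is used. Expanding that Wronskian columnwise in the $\mathcal{J}_{\alpha-j}g_i$ and using the semigroup identity $\mathcal{J}_\alpha\mathcal{J}_\beta=\tfrac{\Gamma(\alpha)\Gamma(\beta)}{\Gamma(\alpha+\beta)}\,\mathcal{J}_{\alpha+\beta}$, one can, after a Vandermonde-type rearrangement, write it as a combination with strictly positive coefficients of expressions built from the CT-system $(g_0,\dots,g_{k-1})$ — which are nonzero and of a single sign — forcing $W\neq 0$.

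The step I expect to be the main obstacle is precisely this last one: tracking the sign of the Wronskian of the fractionally integrated functions so as to conclude it never vanishes, which is what pins down the sharp threshold $n<v+2$. Two features make it delicate. First, the input functions $\ell_i\circ\rho$ are not analytic at the left endpoint (they carry the $u^{-1/2}$ singularity noted above), so all the differentiation-under-the-integral computations must be justified on the open interval with the singular weight retained. Second, the bookkeeping converting the continuous Wronskian of the $\mathcal{J}_\alpha g_i$ back into data attached to the $g_i$ has to be carried out carefully enough to see where, and only where, the inequality $\alpha>n-1$ is genuinely needed. By comparison, the preliminary reduction to a fractional integral is routine once the parametrisation and the two changes of variable are in place.
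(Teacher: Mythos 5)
First, a point of comparison: the paper does not prove this statement at all --- it is imported verbatim from \cite{Grau2011} (Theorem B there, specialised to even potentials) and used as a black box, so there is no in-paper proof to measure you against. Your reduction step is correct and cleanly done: parametrising $\gamma_h$, folding by evenness, using that $V'$ is odd so that $\ell_i=f_i^{e}/V'$, and substituting $u=V(x)$ does give $I_i(h)=2^{\,v+3/2}\int_0^h \ell_i(\rho(u))\,(h-u)^{v-1/2}\,du$, and the CT property of $(\ell_i\circ\rho)$ on $(0,h_0)$ follows from diffeomorphism invariance. So the theorem is correctly reduced to your fractional-integration lemma, with $\alpha=v+\tfrac12$ and the threshold $\alpha>n-1$ matching $n<v+2$.

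The genuine gap is that this lemma --- the entire content of the theorem after the routine reduction --- is asserted rather than proved, and the two justifications you sketch do not stand as written. For the ``completeness'' half you invoke unrestricted total positivity of the truncated-power kernel $(h-u)_+^{\alpha-1}$; but such kernels are totally positive only up to an order tied to the exponent (for non-integer exponents they are not TP of all orders), which is precisely where the hypothesis $n<v+2$ has to enter --- your argument uses that hypothesis only to kill boundary terms when differentiating under the integral, so as stated it would ``prove'' CT-preservation with no condition on $v$ at all, which is false. Moreover, even granting the right order of total positivity, TP only gives nonnegativity of the relevant minors, whereas both the CT and the ECT conclusions need strict nonvanishing. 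For the ``extended'' half, the decisive step --- showing $W[\mathcal{J}_\alpha g_0,\dots,\mathcal{J}_\alpha g_{k-1}]\neq0$ --- is exactly what you flag as the main obstacle, and the ``Vandermonde-type rearrangement into a positive combination of one-signed expressions'' is not an argument. The gap is fillable along your lines: after writing the $j$-th derivative row as a constant multiple of $\mathcal{J}_{\alpha-j}g_i$ (legitimate since $\alpha>k-1$), an Andreief/basic-composition identity expresses the Wronskian as an integral over the ordered simplex $0<u_0<\dots<u_{k-1}<h$ of the product of a generalized Vandermonde determinant $\det\bigl((h-u_l)^{\alpha-1-j}\bigr)$ (one-signed because the real exponents are distinct and $h-u_l>0$) and the discrete Wronskian $\det\bigl(g_i(u_l)\bigr)$ (one-signed and not identically zero because $(\ell_0,\dots,\ell_{k-1})$ is a T-system); the integrability of the most singular exponent $\alpha-k>-1$ again uses $n<v+2$. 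Until that computation (or an equivalent one, e.g.\ the argument in \cite{Grau2011} itself, or a correctly quantified total-positivity statement from \cite{KarlinStudden}) is actually carried out, the proposal does not constitute a proof of the theorem.
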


The authors of \cite{Grau2011} point out that if the
condition $s> n-2 $ does not hold, there is a
procedure to obtain a new expression for the same set of Abelian
integrals for which the corresponding $s$ is large enough to verify
the inequality. We review this result here (stated for potential
systems)  for the convenience of the reader.

\begin{lemma}[\LEM]
\label{L-LEM} Let $\g_h$ be an oval inside the level curve $\{y^2/2+V(x)=h\}$, and consider a
function $F$ such that $F/V'$ is analytic at $x=0$. Then, for any $k\in\N$,
    \begin{equation*}
    \label{e-subibaja}
        \int_{\g_h} F(x) y^{k-2} \, dx = \int_{\g_h} G(x) y^k \, dx,
    \end{equation*}
where $G(x) = \frac{1}{k} \left(\frac{F(x)}{V'(x)}\right)'(x)$.
\end{lemma}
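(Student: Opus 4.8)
The plan is to obtain the identity as a form of ``integration by parts along the oval'', exploiting that $\g_h$ is a closed curve and that a suitable $1$-form is exact. First I would use that $\g_h$ lies in the level set $\{y^2/2+V(x)=h\}$: differentiating this relation along $\g_h$ gives $y\,dy+V'(x)\,dx=0$, that is, $V'(x)\,dx=-y\,dy$ on $\g_h$. This is the only ``geometric'' input; the rest is bookkeeping.

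Next I would introduce the function $\Phi(x,y)=\frac{F(x)}{V'(x)}\,y^{k}$ and observe that it is analytic, hence single-valued and continuous, on the whole closed curve $\g_h$. Indeed, on the $x$-projection of $\g_h$ the denominator $V'$ vanishes only at $x=0$ (the center being located there), and precisely at that point $F/V'$ is analytic by hypothesis. Consequently the exact $1$-form $d\Phi$ integrates to zero over $\g_h$, which reads
\[
 0=\int_{\g_h}\left(\frac{F(x)}{V'(x)}\right)'y^{k}\,dx+k\int_{\g_h}\frac{F(x)}{V'(x)}\,y^{k-1}\,dy .
\]
Substituting $y^{k-1}\,dy=y^{k-2}(y\,dy)=-V'(x)\,y^{k-2}\,dx$ from the first step turns the second integral into $-k\int_{\g_h}F(x)\,y^{k-2}\,dx$, so the display becomes $0=k\int_{\g_h}G(x)\,y^{k}\,dx-k\int_{\g_h}F(x)\,y^{k-2}\,dx$ with $G=\frac1k\big(F/V'\big)'$; dividing by $k$ gives the assertion.

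The one point I expect to need care is the vanishing of $\int_{\g_h}d\Phi$: this rests on $F/V'$ (and hence $\Phi$) being bounded everywhere on $\g_h$, which is exactly what the hypothesis ``$F/V'$ analytic at $x=0$'' supplies, since away from $x=0$ the factor $V'$ does not vanish on $\g_h$. Orientation of $\g_h$ is irrelevant because $d\Phi$ is exact. I would also remark that if $\N$ is understood to include a value of $k$ for which $y^{k-2}$ is singular at the turning points of $\g_h$, the integrals nevertheless converge since $y$ vanishes there only to order $1/2$ in $x$; in all applications in this paper $k\ge 2$ and this subtlety does not arise.
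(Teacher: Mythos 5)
Your proof is correct. Note that the paper does not prove this lemma at all—it is quoted as Lemma 4.1 of \cite{Grau2011}—and your argument (integrating the exact form $d\bigl(\tfrac{F(x)}{V'(x)}y^{k}\bigr)$ over the closed oval and substituting $y\,dy=-V'(x)\,dx$, with the hypothesis on $F/V'$ guaranteeing analyticity across $x=0$) is exactly the standard proof given in that reference.
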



We now apply this theory to Hamiltonian systems \eqref{E-HamSys} corresponding to pendulum-like
 equations of type \eqref{E-pendu}, with total energy given by \eqref{E-Hamiltonian}. The
 following is a useful auxiliary result.


\begin{lemma}
\label{L-Tche} For any $j\in \N$, consider the functions $I_j:(0,2)\longrightarrow\R$ defined by
$I_j(h)=\int_{\gamma_h}\frac{\cos^j(x)}{\sin^{2p}(x)}y^{2v-1}\, dx$ where $p,v\in\N$ and
$\g_h=\{{y^2}/{2}+1-\cos(x)\}.$ If $q<v+1$  then the family $(I_0,I_1,\ldots I_q)$ is an
ECT-system on $(0,2).$ Consequently, if $q<v+1$ then for any polynomial $P$ of degree $q$ the
Abelian integral
\begin{equation}
\label{E-AbelianInt}
    I(h)=\int_{\gamma_h}\frac{P(\cos(x))}{\sin^{2p}(x)}y^{2v-1}\, dx
\end{equation}
has at most $q$ isolated zeros in $(0,2)$, counting
multiplicity,
and it is identically zero if and only if $P$ is identically zero.
\end{lemma}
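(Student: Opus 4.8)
The plan is to apply Theorem~\ref{T-THM} with $V(x)=1-\cos(x)$, so that $V'(x)=\sin(x)$, $V(0)=V'(0)=0$, $V''(0)=1>0$, and $V'(x)>0$ on $(0,\pi)$; here $x_r=\pi$ and $h_0=2$. For the family at hand I would set $f_j(x)=\cos^j(x)/\sin^{2p}(x)$ and $2v-1$ the common odd power of $y$. First I would compute the functions $\ell_j$ from Theorem~\ref{T-THM}: since
\[
\ell_j(x)=\frac12\left(\frac{f_j(x)}{V'(x)}-\frac{f_j(-x)}{V'(-x)}\right)
=\frac12\left(\frac{\cos^j(x)}{\sin^{2p+1}(x)}+\frac{\cos^j(x)}{\sin^{2p+1}(x)}\right)=\frac{\cos^j(x)}{\sin^{2p+1}(x)},
\]
using that $\cos$ is even and $\sin$ is odd. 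So $\ell_j(x)=\cos^j(x)/\sin^{2p+1}(x)$ for $j=0,1,\dots,q$.

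The key step is then to show that $(\ell_0,\ell_1,\dots,\ell_q)$ is a CT-system on $(0,\pi)$. Because all $\ell_j$ share the nonvanishing factor $1/\sin^{2p+1}(x)$, any nontrivial linear combination $\sum_{j=0}^{k-1}\alpha_j\ell_j(x)$ vanishes exactly where the polynomial $\sum_{j=0}^{k-1}\alpha_j t^j$ vanishes at $t=\cos(x)$; since $\cos$ is a diffeomorphism from $(0,\pi)$ onto $(-1,1)$, the number of zeros on $(0,\pi)$ equals the number of zeros of a degree $\le k-1$ polynomial in $(-1,1)$, hence at most $k-1$. This holds for every $k\le q+1$, so $(\ell_0,\dots,\ell_q)$ is a CT-system. (Equivalently one checks the discrete Wronskians $D[\ell_0,\dots,\ell_{k-1}]$ are nonzero via the Vandermonde determinant after factoring out the common $1/\sin^{2p+1}$.) Then, provided $q<v+1$, i.e. $q+1<v+2$ so the "$n<v+2$" hypothesis of Theorem~\ref{T-THM} is met with $n=q+1$, we conclude that $(I_0,\dots,I_q)$ is an ECT-system on $(0,2)$.

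The consequence is immediate: given a polynomial $P$ of degree $q$, the integral $I(h)$ in \eqref{E-AbelianInt} is the linear combination $\sum_{j=0}^q p_j I_j(h)$ where $P(t)=\sum_{j=0}^q p_j t^j$. Since $(I_0,\dots,I_q)$ is an ECT-system, any nontrivial such combination has at most $q$ isolated zeros counting multiplicity on $(0,2)$; and if the combination is the zero function, then by the definition of ECT-system (linear independence of the $I_j$) all $p_j=0$, i.e. $P\equiv 0$. Conversely $P\equiv0$ trivially gives $I\equiv0$. I do not expect a serious obstacle here: the only point needing a little care is matching the index conventions so that the inequality in Theorem~\ref{T-THM} reads correctly ($n=q+1$ functions, and $n<v+2$ is precisely $q<v+1$), and verifying that $f_j/V'$ is analytic at $0$ is not required by the potential version of the theorem since only the odd part $\ell_j$ enters — but one should note $\ell_j$ itself need not be analytic at $0$ (it has a pole), which is fine because Theorem~\ref{T-THM} asks only that $(\ell_0,\dots,\ell_{n-1})$ be a CT-system on the \emph{open} interval $(0,x_r)$, not at the endpoint.
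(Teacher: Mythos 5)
Your proposal is correct and follows essentially the same route as the paper: you invoke Theorem~\ref{T-THM} with $V(x)=1-\cos(x)$, identify $\ell_j(x)=\cos^j(x)/\sin^{2p+1}(x)$, and reduce the CT-property on $(0,\pi)$ to that of $(1,t,\dots,t^q)$ on $(-1,1)$ via the diffeomorphism $t=\cos(x)$, with the index count $n=q+1<v+2$ matching the hypothesis $q<v+1$. Your extra details (the explicit computation of $\ell_j$, the remark on the pole at $x=0$ being harmless since only a CT-system on the open interval is needed, and the linear-independence argument for the ``identically zero iff $P\equiv0$'' part) are consistent with, and slightly more explicit than, the paper's proof.
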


\begin{proof} Consider the odd functions $\ell_j(x)=\frac{\cos^j(x)}{\sin^{2p+1}(x)},$
for $j=0,\dots q$. In view of
Theorem \ref{T-THM} it suffices to show that $(\ell_0,\dots,
\ell_{q})$ is a CT-system on $(0,\pi).$ However, since $\sin (x)$
has no zeros in $(0,\pi)$ this is equivalent to showing that
$1,\cos(x),\ldots,\cos^q(x)$ is an CT-system on $(0,\pi).$ But this
is a direct consequence of the facts that $(1,x,\ldots,x^q)$ is an
ECT-system on $(-1,1)$ and $\cos(x)$ is a diffeomorphism between
$(0,\pi)$ and $(-1,1).$
\end{proof}


Due to the particular structure of the Hamiltonian under
consideration, the function $G$ in   Lemma \ref{L-LEM}  has a rather
simple form which reveals an interesting structural property of the
Abelian integrals \eqref{E-AbelianInt}. To see this,  let $A$ be
the set of real analytic functions on $(0,\pi)$  and consider the
linear operator $\L: A \longrightarrow A$ defined as
\begin{equation}
 \label{E-operatorL}
     \L[f(x)] :=\left (\frac{f(x)}{\sin(x)}\right)'.
\end{equation}
From now on  $\L^j$ denotes the composition of the operator $j$
times and $\L^0=\id$.


\begin{lemma}
\label{L-operatorL} Consider the operator $\L$ defined in \eqref{E-operatorL}.  Then the following
statements hold:

\begin{equation}
    \label{E-Lj}
    \L^j[f \cos(x)] = \L^j[f]\cos(x) - j\L^{j-1}[f]\quad \text{ for all } j\in\N. \tag{A}
\end{equation}

\begin{equation}
    \label{E-Ljm}
         \L^j[\cos(mx)] = \dfrac{P_{j,m}(\cos(x))}{\sin^{2j}(x)} \quad \text{ for all }
          j\geq m, \tag{B}
    \end{equation}
 where $P_{j,m}$ is a polynomial of degree $j-m$ satisfying the relation
    \begin{align}
    \label{E-Pjm}
        P_{j+1,m}(u) & = -P'_{j,m}(u) (1-u^2)-(2j+1)uP_{j,m}(u)
    \end{align}
with $P_{m,m} = K(m) \in \R,$ where
\begin{equation}
\label{E-Km}
    K(m+1) = -(2m+1) K(m), \quad K(0)=1.
\end{equation}
\end{lemma}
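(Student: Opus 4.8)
The plan is to prove Lemma~\ref{L-operatorL} by establishing the three claims in order, each by a fairly direct induction on $j$.

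\medskip

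\emph{Step 1: the Leibniz-type formula \eqref{E-Lj}.} I would argue by induction on $j$. For $j=1$ we compute directly
\[
\L[f\cos(x)]=\left(\frac{f(x)\cos(x)}{\sin(x)}\right)'
=\left(\frac{f(x)}{\sin(x)}\right)'\cos(x)+\frac{f(x)}{\sin(x)}\cdot\big(-\sin(x)\big)
=\L[f]\cos(x)-f(x),
\]
which is exactly the claimed identity since $\L^0=\id$. For the inductive step, assume \eqref{E-Lj} holds for $j$; then apply $\L$ to both sides. On the left we get $\L^{j+1}[f\cos(x)]$. On the right, $\L\big(\L^j[f]\cos(x)\big)$ is again handled by the $j=1$ case (with $\L^j[f]$ in place of $f$), giving $\L^{j+1}[f]\cos(x)-\L^j[f]$, while $\L\big(j\L^{j-1}[f]\big)=j\L^j[f]$. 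Collecting terms yields $\L^{j+1}[f\cos(x)]=\L^{j+1}[f]\cos(x)-(j+1)\L^j[f]$, completing the induction.

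\medskip

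\emph{Step 2: the polynomial form \eqref{E-Ljm} and the recursion \eqref{E-Pjm}.} I would fix $m$ and induct on $j\ge m$. For the base case $j=m$ I must check that $\L^m[\cos(mx)]$ is a constant; this is best seen as a consequence of the more general computation that underlies the inductive step, so let me first record the key differentiation rule. If $g(x)=R(\cos(x))/\sin^{2j}(x)$ for a polynomial $R$, then
\[
\L[g](x)=\left(\frac{R(\cos(x))}{\sin^{2j+1}(x)}\right)'
=\frac{-R'(\cos(x))\sin^2(x)-(2j+1)\cos(x)R(\cos(x))}{\sin^{2j+2}(x)},
\]
so that $\L$ sends $R(\cos x)/\sin^{2j}x$ to $\widetilde R(\cos x)/\sin^{2(j+1)}x$ with $\widetilde R(u)=-R'(u)(1-u^2)-(2j+1)uR(u)$, which is precisely the transition rule \eqref{E-Pjm}; note $\deg\widetilde R\le \max(\deg R+1,\deg R+1)$, but the top-degree terms of $-R'(u)(1-u^2)$ and $-(2j+1)uR(u)$ are $+\deg(R)\,u^{\deg R+1}$ and $-(2j+1)u^{\deg R+1}$, so they do not cancel and $\deg\widetilde R=\deg R+1$. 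Starting the induction therefore reduces to the base case $j=m$: here I would verify by a short direct argument that $\L^m[\cos(mx)]$ is the constant $K(m)$, using $\cos(mx)=T_m(\cos x)$ (the Chebyshev polynomial) and checking that applying $\L$ exactly $m$ times to $T_m(\cos x)/\sin^0 x$ lowers the ``cosine-degree'' by one at each step while raising the power of $\sin$ in the denominator by two, so after $m$ steps the numerator has degree $0$; tracking the leading coefficient through the recursion $P_{j+1,m}(u)=-P'_{j,m}(u)(1-u^2)-(2j+1)uP_{j,m}(u)$ with $\deg P_{j,m}=j-m$ gives, for the leading coefficient $c_j$, the relation $c_{j+1}=\big((j-m)-(2j+1)\big)c_j=-(j+m+1)c_j$; evaluated at $j=m$ this reads $K(m+1)=-(2m+1)K(m)$ with $K(0)=1$ coming from $\L^0[\cos(0\cdot x)]=1$, which is \eqref{E-Km}. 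The inductive step for $j>m$ is then immediate from the differentiation rule above: if $\L^j[\cos(mx)]=P_{j,m}(\cos x)/\sin^{2j}x$ then applying $\L$ once more gives $P_{j+1,m}(\cos x)/\sin^{2(j+1)}x$ with $P_{j+1,m}$ defined by \eqref{E-Pjm}, and the degree count $\deg P_{j+1,m}=\deg P_{j,m}+1=(j-m)+1=(j+1)-m$ propagates.

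\medskip

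\emph{Main obstacle.} The genuinely delicate point is the base case of \eqref{E-Ljm}: showing that $\L^m[\cos(mx)]$ is actually a \emph{constant} (degree $0$) and not merely a rational function whose numerator happens to have degree $\le 0$ after cancellation, and simultaneously pinning down that constant as $K(m)$. Everything hinges on the fact that each application of $\L$ to $\cos(jx)$-type data drops the trigonometric degree by exactly one — equivalently, that $\L\big[\cos((j+1)x)\big]$ and the lower harmonics interact so that no spurious higher-degree terms survive. The cleanest route I see is to prove a slightly stronger statement by downward bookkeeping on harmonics: show that $\L[\cos(kx)]$ is (up to the $1/\sin^2$ factor) a polynomial in $\cos x$ of degree exactly $k-1$ with controlled leading coefficient, then iterate; the Chebyshev-polynomial identity $\cos(mx)=T_m(\cos x)$ together with $T_m'(u)=mU_{m-1}(u)$ makes the degree drop transparent. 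Once the base case is secured, the rest is routine induction, and formula \eqref{E-Lj} is entirely elementary.
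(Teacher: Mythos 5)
Your Step 1 and the inductive step $j\to j+1$ of Step 2 are fine and coincide with the paper's argument (the transition rule \eqref{E-Pjm} is exactly how the paper passes from $j$ to $j+1$, and in the relevant range $\deg P_{j,m}=j-m<2j+1$, so the leading terms indeed cannot cancel there). The genuine gap is precisely at the point you flag as delicate: the base case $j=m$. The mechanism you propose for it is false. Applying $\L$ to $\cos(mx)=T_m(\cos x)$ does \emph{not} lower the cosine-degree by one at each step; by the very transition rule you wrote, one application sends a numerator of degree $d$ over $\sin^{2j}x$ to a numerator of degree $d+1$ (unless $d=2j+1$). Concretely, $\L[\cos(2x)]=\dfrac{2\cos^3x-3\cos x}{\sin^2 x}$, so the degree goes up from $2$ to $3$; the constancy of $\L^2[\cos(2x)]=3/\sin^4x$ comes from a cancellation at the last step ($-R'(u)(1-u^2)-3uR(u)=3$ for $R(u)=2u^3-3u$), not from a monotone degree drop. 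Your leading-coefficient bookkeeping does not rescue this: the relation $c_{j+1}=-(j+m+1)c_j$ evaluated at $j=m$ computes the leading coefficient of $P_{m+1,m}$, i.e.\ it concerns $\L^{m+1}[\cos(mx)]$, whereas $K(m+1)$ in \eqref{E-Km} is $P_{m+1,m+1}$, attached to the \emph{next} harmonic $\cos((m+1)x)$. That these numbers agree is not something your argument establishes, so neither the constancy of $\L^m[\cos(mx)]\sin^{2m}(x)$ nor the recursion \eqref{E-Km} is actually proved.

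The paper closes this gap with a different idea: from $\cos(mx)=\cos((m-1)x)\cos x-\sin((m-1)x)\sin x$, the identity $\L[\sin((m-1)x)\sin x]=(m-1)\cos((m-1)x)$ and part (A), one gets
\begin{equation*}
\L^{m}[\cos(mx)]=\L^{m}[\cos((m-1)x)]\cos x-(2m-1)\,\L^{m-1}[\cos((m-1)x)],
\end{equation*}
and then an induction on $m$ (using the transition rule once more on $K(m)/\sin^{2m}x$) yields $\L^{m+1}[\cos((m+1)x)]=K(m+1)/\sin^{2(m+1)}x$ with $K(m+1)=-(2m+1)K(m)$. You need an argument of this kind (or some other genuine proof of the cancellation, e.g.\ a Chebyshev-polynomial identity proved by its own induction) to make the base case, and hence the lemma, go through.
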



\begin{proof}
The proof of statement \eqref{E-Lj} is a straightforward  induction in $j$ using the
fact that the
operator $\L$ is linear which we omit for the sake of brevity.

The  proof of statement \eqref{E-Ljm} follows by induction as well. We start
with the base case when $j=m$ and the claim that
\begin{equation}
\label{E-Lmcosm}
          \L^m[\cos(mx)] = \frac{K(m)}{\sin^{2m}(x)},
\end{equation}
where the real number $K(m)$ is defined as above. To prove the claim, let us start with some
preliminary considerations. Notice that, in view of the identity $$\cos(mx)=\cos((m-1)x)\cos(x)
- \sin((m-1)x)\sin(x)$$ and the fact that the operator $\L$ is linear, we find that
\begin{align*}
    \L[\cos(mx)] &= \L[\cos((m-1)x)]\cos(x)-\cos((m-1)x)-\left(\sin((m-1)x)\right)'\\
                        &= \L[\cos((m-1)x))]\cos(x)-m\cos((m-1)x),
\end{align*}
where we have used statement \eqref{E-Lj} in the first equality. In view of this relation and
using statement \eqref{E-Lj} once more, we obtain that
\begin{align}
\label{E-Lmcosm2}
    \L^m[\cos(mx)] &= \L^{m-1}[\L[\cos((m-1)x)]\cos(x)] - m \L^{m-1}[\cos((m-1)x)]\notag\\
                             &= \L^m[\cos((m-1)x)]\cos(x)-(2m-1)\L^{m-1}[\cos((m-1)x)].
\end{align}
We are now ready to prove the claim by induction in $m$. The base case for $m=0$ holds trivially
with $K(0)=1$. Assuming that the statement is true for all $m\in\N$, we prove the inductive step
using the identities \eqref{E-Lmcosm} and \eqref{E-Lmcosm2} derived above. Indeed,
\begin{align*}
    \L^{m+1}&[\cos((m+1)x)] = \L^{m+1}[\cos(mx))]\cos(x)-(2m+1)\L^m[\cos(mx)] \\
                                          &= \L\left[ \frac{K(m)}{\sin^{2m}(x)}\right] \cos(x)
                                                -(2m+1) \frac{K(m)}{\sin^{2m+2}(x)}\\
                                          &= \frac{-(2m+1)K(m)\cos^2(x)}{\sin^{2m+2}(x)}
                                               -\frac{(2m+1) K(m)(1-\cos^2(x))}{\sin^{2(m+1)}(x)}\\
                                          &= \frac{K(m+1)}{\sin^{2(m+1)}(x)} ,
\end{align*}
which proves the claim with  $K(m+1)=-(2m+1)K(m)$ as defined in \eqref{E-Km}. Let us proceed with the
proof of statement \eqref{E-Ljm}. Assuming that this statement holds for $j\in \N$, the inductive step
follows immediately from the definition of $\L$. Indeed,
\begin{align*}
    \L^{j+1}[\cos(mx)]&= \L\left[ \frac{P_{j,m}(\cos(x))}{\sin^{2j}(x)}\right] \\
                       &= \frac{-P'_{j,m}(\cos(x))(1-\cos^2(x))-(2j+1)\cos(x)P_{j,m}
                       (\cos(x))}{\sin^{2(j+1)}(x)},
\end{align*}
which in view of \eqref{E-Pjm} concludes the proof.
\end{proof}


\begin{proof}[Proof of  Theorem \ref{T-thm inside}]

First we prove statement $(a)$. If $r=-1$ then $s_1=s_2=2p$ for some
natural integer $p.$ Then the result follows directly from the fact
that in this situation the system (\ref {E-penduSysGen}) is
reversible with respect the $x$-axis and therefore it has a center
at the origin for all $\ve.$

$(b)$ Now $r\ge 0.$ The above argumentation shows that all Abelian integrals
$\int_{\gamma}Q_{n,s}y^s$ are identically zero when $s$ is even. We obtain that
$$M^0(h)=\int_{\gamma_h}\sum_{s=s_1}^{s_2}Q_{n,s}(x)\,
y^{s}\, dx=\int_{\gamma_h}\sum_{s=0}^{r}Q_{n,\ell+2s}(x)\, y^{\ell+2s}\, dx.$$ Moreover, since
$\int_{\gamma_h}\sin(jx)y^s\, dx$ is identically zero  on $(0,2)$  for all $j\ge 0$ by Green's
Theorem, we obtain
\begin{equation*}
    M^0(h)=\int_{\gamma_h}\sum_{s=0}^{r}Q_{n,\ell+2s}(x)
        \,y^{\ell+2s}\, dx=\int_{\gamma_h}\sum_{s=0}^{r}Q^e_{n,\ell+2s}(x)
        \,y^{\ell+2s}\, dx.
\end{equation*}
Now we prove item $(b_1)$. In view of Lemma \ref{L-LEM} and using the operator $\L$ defined in
\eqref{E-operatorL} we may write
\begin{align*}
    M^0(h)&=\int_{\gamma_h}\sum_{s=0}^r Q^e_{n,\ell+2s}(x)y^{\ell+2s}\, dx\\
                &=\int_{\gamma_h}\left(\sum_{s=0}^r \L^{n+r-s}
                    \left(Q^e_{n,\ell+2s}(x)\right)\right)y^{\ell+2n+2r}\, dx.
\end{align*}
From Lemma \ref{L-operatorL} (B) it follows that
\begin{equation*}
    \L^{n+r-s}\left(Q^e_{n,s}(x)\right)=\frac{R_{s}(\cos (x))}{\sin^{2(n+r-s)}(x)},
\end{equation*}
 for  certain polynomials $R_{s}$ of degree $n+r-s.$ Thus,  we obtain
\begin{multline*}
M^0(h) = \int_{\gamma_h}\left(\sum_{s=0}^r
                    \frac{R_{s}(\cos(x))}{\sin^{2(n+r-s)}(x)}\right)y^{\ell+2n+2r}\, dx \\
           = \int_{\gamma_h}\left(\sum_{s=0}^r
                    \frac{R_{s}(\cos(x))(\sin(x))^{2s}}{\sin^{2(n+r)}(x)}\right)y^{\ell+2n+2r}\, dx
             = \int_{\gamma_h}\frac{ R(\cos(x))}{\sin^{2(n+r)}(x)}y^{\ell+2n+2r}\, dx,
 \end{multline*}
 where $R(u)=\sum_{s=0}^{r} R_{s}(u)(1-u^2)^s$ is a polynomial of
 degree $n+2r.$ Since $2r<\ell+3$, the first part of  statement  $(b_1)$ follows from
 Lemma \ref{L-Tche}.

To prove the second part we need to show that for $r\le 2$ using the above procedure we can obtain
any prescribed polynomial $R(u)$ of degree $n+2r.$ For $r=0$ this follows because $ R(u)=
R_{0}(u)$, which is defined by
$$\L^n \left(Q^e_{n,1}(x)\right)=\frac
{R_{0}(\cos(x))}{\sin^{2n}(x)}.$$ Thus, for $i=0,\ldots,n$ choosing $Q_{n,0}(x)=\cos(ix)$ we
obtain $R(x)=P_{n,i}(x)$ which is a polynomial of degree exactly $n-i.$ Clearly the set
$\{P_{n,0},P_{n,1},\ldots,P_{n,n}\}$ is a basis of the polynomials of degree $n.$ This shows that
there exists a linear combination of perturbations $Q_{n,0}$ for which the corresponding Melnikov
function has exactly $n$ zeros. This proves the case $r=0.$ When $r=1$ we have that $R(u)=
R_{0}(u)+R_1(u)(1-u^2)$, where $R_0(u)$ and $R_1(u)$  are defined by
\begin{equation*}
    \L^{n+1}\left(Q^e_{n,0}(x)\right)=\frac{R_{0}(\cos(x))}{\sin^{2(n+1)}(x)} \mbox { and }
    \L^{n}\left(Q^e_{n,1}(x)\right)=\frac{R_{1}(\cos(x))}{\sin^{2n}(x)},
\end{equation*}
respectively. Choosing
$Q_{n,0}(x)=\frac{\cos((n-1)x)}{(2n-1)K(n-1)} $ and
$Q_{n,1}(x)=\frac{2n\cos(n x)}{K(n)} $ we get
$$R(x)=\frac{P_{n+1,n-1}(x)}{(2n-1)K(n-1)}+(1-x^2)2n\frac{P_{n,n}(x)}{K(n)}=(1+2nx^2)
+2n(1-x^2)=2n+1$$ which is a degree 0 polynomial. On the other hand choosing
$Q_{n,1}(x)=0$ and $Q_{n,0}(x)=\cos (ix)$ for $i=0,\ldots n,$ we obtain that $R$ is a polynomial of degree $i+1.$
Lastly, choosing $Q_{n,1}(x)=1$ and  $Q_{n,0}(x)=\cos (ix)$ we obtain that $R$ is a polynomial of
degree $n+2$. These choices give a basis of the polynomials of degree $n+2.$ The same type of
arguments and computations shows the result for $r=2$, but we omit these computations for the sake
of brevity.

Item  $(b_2)$ follows from the fact that in this case the Melnikov integral is
identically zero if and only if $Q_{n,s_1}$ depends only on $\sin (x)$, i.e.~$Q_{n,s_1}^e= 0$, in
which case the system is reversible and has a center at the origin for all $\ve.$
\end{proof}

\begin{proof}[Proof of Proposition~\ref{p-nova}]

The proof is conducted along the lines of the proof of Theorem~\ref{T-thm inside} and involves a lot of
computations. For the sake of brevity we only give details in the case $r=2.$ We need to study the
number of zeros of linear combinations of $I_{0,2s-1}(y)=\int_{\gamma_h} y^{2s-1}\,dx.$  By
Lemma~\ref{L-LEM}, using the notation of Theorem~\ref{T-THM} and the operator $\L$ given in
\eqref{E-operatorL} it holds that
\begin{align*}
I_{0,1}(y)&=\int_{\gamma_h} y\,dx= \int_{\gamma_h} \L[1](x)\, y^3\,dx= \int_{\gamma_h} \L^2[1](x)\,
 y^5\,dx,\\
I_{0,3}(y)&=\int_{\gamma_h} y^3\,dx= \int_{\gamma_h} \L[1](x)\, y^5\,dx,\\
I_{0,5}(y)&=\int_{\gamma_h} y^5\,dx.
\end{align*}
Simple computations give that
\[
\L[1](x)=-\frac{\cos(x)}{\sin^2(x)},\quad \L^2[1](x)=\frac{2\cos^2(x)+1}{\sin^4(x)}.
\]
These functions are even and well-defined in $(0,\pi).$
Notice that the three integrals $I_{0,s}, s=1,2,3$ all involve the term $y^5$. Therefore,  following
the notation of Theorem~\ref{T-THM} we have that $v=3$ and $n=3$.
Moreover, direct computations give that the Wronskians of the set of functions $(1,\L[1],\L^2[1])$
are
\begin{align*}
W[1]&=1,\quad W\big[1,\L[1]\big]=\frac{\cos^2(x)+1}{\sin^3(x)}\quad\mbox{and}\\
W\big[1,\L[1],\L^2[1]\big]&=\frac{4(\cos^6(x)+6\cos^4(x)+3\cos^2(x)+2)}{\sin^9(x)}.
\end{align*}
Clearly, each one of them does not vanish on $(0,\pi)$ and it holds that $n<v+2.$ Therefore, we
can apply Theorem~\ref{T-THM}, proving that the  functions $I_{0,1}, I_{0,3}$ and $I_{0,5}$ are an
ECT-system on $(0,2)$.
\end{proof}


\section{Proof of Theorem C}\label{pc}

To study the limit cycles in the rotary regions $\mathcal{R}^\pm$ we
resort to a result  of Gasull, Li and Torregrosa published in
\cite{Gasull2012b}. In this paper,  the authors introduce the family
of analytic functions
\begin{equation}
\label{E-intfam}
    J_{i,\alpha}(y)=\int_a^b{\frac{g^i(x)}{(1-yg(x))^{\alpha}}dx},\quad i=0,1,\ldots,n,
\end{equation}
where  $g$ is a continuous function, $a,b\in\R$ and $\alpha \in \R\setminus \Z^-$. These functions
are defined on the open interval $W$ where $1-yg(x)>0$ for all $x\in[a,b]$. They prove:
\begin{theorem}[Theorem A in \cite{Gasull2012b}]
\label{T-THM2} For any $n\in\N$ and any $\alpha \in  \R\setminus
\Z^-$, the ordered set of functions $(J_{0,\alpha},\dots,
J_{n,\alpha})$, as defined in \eqref{E-intfam}, is an ECT-system on
$W$.
\end{theorem}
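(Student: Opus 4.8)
The plan is to verify the ECT-property directly through the characterization of ECT-systems by non-vanishing of Wronskians stated above: it suffices to prove that for every $k\in\{1,\dots,n+1\}$ the Wronskian $W[J_{0,\alpha},\dots,J_{k-1,\alpha}](y)$ is nonzero for all $y\in W$. The first ingredient is the elementary differentiation identity
\[
J_{i,\alpha}'(y)=\alpha\,J_{i+1,\alpha+1}(y),\qquad\text{hence}\qquad
J_{i,\alpha}^{(j)}(y)=\big(\alpha(\alpha+1)\cdots(\alpha+j-1)\big)\,J_{i+j,\alpha+j}(y),
\]
which is legitimate because $g$ is continuous on the compact interval $[a,b]$ and $1-yg(x)$ is bounded away from $0$ for $y$ near any point of $W$; the same observation shows each $J_{i,\alpha}$ is real-analytic on $W$, so the ECT-framework applies. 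It follows that the $i$-th row of the Wronskian matrix is a fixed scalar multiple of $\big(J_{i,\alpha+i}(y),\dots,J_{i+k-1,\alpha+i}(y)\big)$, so that
\[
W[J_{0,\alpha},\dots,J_{k-1,\alpha}](y)=\Big(\prod_{i=0}^{k-1}\prod_{l=0}^{i-1}(\alpha+l)\Big)\,
\det\!\Big(J_{i+j,\,\alpha+i}(y)\Big)_{0\le i,j\le k-1},
\]
where the scalar prefactor does not vanish by the hypothesis $\alpha\in\R\setminus\Z^-$. Everything thus reduces to showing that the generalized moment determinant $\Delta_k(y):=\det\big(J_{i+j,\alpha+i}(y)\big)_{0\le i,j\le k-1}$ is nonzero on $W$.

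To handle $\Delta_k$, I would write $J_{i+j,\alpha+i}(y)=\int_a^b v_i(x)u_j(x)\,dx$ with $u_j(x)=g^j(x)$ and $v_i(x)=g^i(x)/(1-yg(x))^{\alpha+i}$, and then apply Andreief's identity (a continuous version of the Cauchy--Binet formula):
\[
\Delta_k(y)=\frac1{k!}\int_{[a,b]^k}\det\big(v_i(x_l)\big)_{0\le i,l\le k-1}\;\det\big(u_j(x_l)\big)_{0\le j,l\le k-1}\;dx_0\cdots dx_{k-1}.
\]
Both determinants inside the integral are of Vandermonde type: $\det\big(u_j(x_l)\big)=\prod_{l<l'}\big(g(x_{l'})-g(x_l)\big)$, while factoring $(1-yg(x_l))^{-\alpha}$ out of the $l$-th column of $\det\big(v_i(x_l)\big)$ turns it into a Vandermonde determinant in the quantities $w_l:=g(x_l)/(1-yg(x_l))$, and the key simplification $w_{l'}-w_l=(g(x_{l'})-g(x_l))/[(1-yg(x_{l'}))(1-yg(x_l))]$ makes the product of the two Vandermonde factors equal to $\prod_{l<l'}(g(x_{l'})-g(x_l))^2/[(1-yg(x_{l'}))(1-yg(x_l))]$. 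Hence the integrand in the expression for $\Delta_k(y)$ equals
\[
\Big(\prod_{l=0}^{k-1}\frac1{(1-yg(x_l))^{\alpha}}\Big)\;\prod_{0\le l<l'\le k-1}\frac{\big(g(x_{l'})-g(x_l)\big)^2}{(1-yg(x_{l'}))(1-yg(x_l))},
\]
which, since $1-yg(x)>0$ on $[a,b]$ for $y\in W$, is nonnegative everywhere on $[a,b]^k$ and strictly positive exactly when $g(x_0),\dots,g(x_{k-1})$ are pairwise distinct. Under the non-degeneracy assumption on $g$ (every level set of $g$ having zero Lebesgue measure, which is precisely what makes the set of $k$-tuples with pairwise distinct values $g(x_0),\dots,g(x_{k-1})$ of positive measure) the integral is therefore strictly positive, so $\Delta_k(y)\neq0$, and consequently $W[J_{0,\alpha},\dots,J_{k-1,\alpha}](y)\neq0$ on $W$ for every $k\le n+1$. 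The Wronskian criterion then gives that $(J_{0,\alpha},\dots,J_{n,\alpha})$ is an ECT-system on $W$.

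The differentiation identity, the Vandermonde evaluations, and the justification of differentiation under the integral sign are routine; the conceptual core, and the step I expect to require the most care, is the reduction of the Wronskian to the moment determinant $\Delta_k$ followed by the Andreief and double-Vandermonde computation, together with pinning down exactly which non-degeneracy hypothesis on $g$ upgrades ``integrand $\ge0$'' to ``integral $>0$''. One should also keep careful track of the signs arising from Andreief's identity and from the two Vandermonde determinants, so that their product really is the perfect square displayed above and not its negative.
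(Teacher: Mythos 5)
The paper never proves Theorem~\ref{T-THM2}: it is imported verbatim from \cite{Gasull2012b} and used as a black box, so there is no internal proof here to compare yours with. Taken on its own, your argument is a correct, self-contained proof along classical total-positivity lines (in the spirit of Karlin--Studden style moment arguments), and it is a legitimate alternative to simply citing the reference. The differentiation identity $J_{i,\alpha}'=\alpha J_{i+1,\alpha+1}$ and its iterate, the reduction of $W[J_{0,\alpha},\dots,J_{k-1,\alpha}]$ to the determinant $\det\big(J_{i+j,\alpha+i}\big)_{0\le i,j\le k-1}$ by extracting the factor $\prod_{l=0}^{i-1}(\alpha+l)$ from the $i$-th row, Andreief's identity, and the collapse of the two Vandermonde factors via $w_{l'}-w_l=(g(x_{l'})-g(x_l))/\big[(1-yg(x_{l'}))(1-yg(x_l))\big]$ are all correct; the sign bookkeeping is consistent because both determinants are expanded with the same ordering of $x_0,\dots,x_{k-1}$, so their product really is the nonnegative expression you display. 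Combined with analyticity of the $J_{i,\alpha}$ on $W$ and the Wronskian characterization of ECT-systems, this yields the theorem.

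Two caveats, both about hypotheses rather than about the computation. First, strict positivity of the integral does require a non-degeneracy assumption on $g$, but the one you propose (every level set of $g$ of measure zero) is stronger than needed: since $g$ is continuous, it suffices that $g$ be non-constant, for then for each $k\le n+1$ one can pick $k$ points with pairwise distinct $g$-values and, by continuity, a product of small neighbourhoods of positive measure on which your integrand is strictly positive. Some hypothesis of this kind is genuinely unavoidable: for constant $g$ all the $J_{i,\alpha}$ are proportional, every Wronskian vanishes identically, and the statement as transcribed above (``$g$ a continuous function'' only) is not literally correct; in the present paper this is immaterial because the theorem is applied with $g(x)=\cos(x)$ on $[0,\pi]$, which is injective. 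Second, your scalar prefactor $\prod_{i=0}^{k-1}\prod_{l=0}^{i-1}(\alpha+l)$ is nonzero only when $\alpha\notin\{0,-1,-2,\dots\}$; the hypothesis $\alpha\in\R\setminus\Z^-$ as written allows $\alpha=0$, for which the family degenerates to constants and the argument (and the conclusion) breaks down. Again this is harmless for the application here, where $\alpha=-(2p+1)/2$, but you should state both side conditions explicitly if you intend your proof to stand alone.
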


The following proposition is a simple consequence of this result.

\begin{proposition}
\label{P-pre} For $n,p\in \N$ and $\beta=2p+1,$ the family
$$(I_{0,\beta}(h), I_{1,\beta}(h),\ldots, I_{n,\beta}(h)),$$ where the
functions $I_{i,\beta}(h)=\int_{0}^\pi\cos^i(x)y^{\beta/2}\, dx$ are
given in \eqref{E-integrals2}, is an ECT-system on $(2,\infty).$
Moreover,
 the same holds for the family \[(I^{(j)}_{0,\beta}(h),
I^{(j)}_{1,\beta}(h),\ldots, I^{(j)}_{n,\beta}(h))\] where, for any
$j>0$, $I^{(j)}_{i,\beta}$ denotes the $j^{th}$-derivative of
$I_{i,\beta}.$
\end{proposition}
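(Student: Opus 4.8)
The plan is to deduce everything from Theorem~\ref{T-THM2} by a single change of variables that handles $I_{i,\beta}$ and all its derivatives at once. Writing $\beta=2p+1$, for $h\in(2,\infty)$ (where $\alpha(h)=\pi$) the definition \eqref{E-integrals2} gives $I_{i,\beta}(h)=\int_0^\pi\cos^i(x)\,(h-1+\cos x)^{p+1/2}\,dx$; since $h-1+\cos x\ge h-2>0$ on $[0,\pi]$, I may differentiate under the integral sign to obtain, for every $j\in\N$,
\[
 I^{(j)}_{i,\beta}(h)=\mu_j\int_0^\pi\cos^i(x)\,(h-1+\cos x)^{p+1/2-j}\,dx,\qquad
 \mu_j:=\prod_{l=0}^{j-1}\Big(p+\tfrac12-l\Big),
\]
with $\mu_0:=1$ (empty product). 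Each factor $p+\tfrac12-l$ is a nonzero half-integer, so $\mu_j\neq0$.

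Next I would factor $h-1+\cos x=(h-1)(1-y\cos x)$ with $y=y(h):=-1/(h-1)$, which yields
\[
 I^{(j)}_{i,\beta}(h)=\mu_j\,(h-1)^{p+1/2-j}\,J_{i,\alpha_j}\big(y(h)\big),\qquad\alpha_j:=j-p-\tfrac12,
\]
where $J_{i,\alpha_j}$ is the function \eqref{E-intfam} built from $g(x)=\cos x$ on $[a,b]=[0,\pi]$. For this $g$ one has $W=(-1,1)$, since $1-y\cos x>0$ for all $x\in[0,\pi]$ precisely when $|y|<1$; moreover $h\mapsto y(h)$ is an analytic diffeomorphism from $(2,\infty)$ onto $(-1,0)\subset W$ (note $y'(h)=1/(h-1)^2>0$). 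As $\alpha_j$ is a half-integer we have $\alpha_j\in\R\setminus\Z^-$, so Theorem~\ref{T-THM2} applies and $(J_{0,\alpha_j},\dots,J_{n,\alpha_j})$ is an ECT-system on $W$, hence also on the subinterval $(-1,0)$.

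Finally I would transfer the ECT property across the prefactor and the change of variables. Fix $1\le k\le n+1$ and scalars $c_0,\dots,c_{k-1}$ not all zero, and set $G:=\sum_{i=0}^{k-1}c_i\,J_{i,\alpha_j}$. Then
\[
 \sum_{i=0}^{k-1}c_i\,I^{(j)}_{i,\beta}(h)=\mu_j\,(h-1)^{p+1/2-j}\,G\big(y(h)\big).
\]
The prefactor $\mu_j(h-1)^{p+1/2-j}$ is analytic and nowhere zero on $(2,\infty)$ and $y(\cdot)$ is a diffeomorphism, so the left-hand side has the same zeros, with the same multiplicities, as $G\circ y$ on $(2,\infty)$; since $G\not\equiv0$ and $(J_{0,\alpha_j},\dots,J_{n,\alpha_j})$ is an ECT-system on $(-1,0)$, $G$ has at most $k-1$ zeros there counting multiplicity, hence so does $\sum_{i=0}^{k-1}c_i\,I^{(j)}_{i,\beta}$ on $(2,\infty)$. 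Thus $(I^{(j)}_{0,\beta},\dots,I^{(j)}_{n,\beta})$ is an ECT-system on $(2,\infty)$ for every $j\in\N$, and the case $j=0$ is the first assertion of the proposition.

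The argument is essentially a computation, and I expect no genuine obstacle; the one point that needs care is the interplay between differentiation and the reduction, namely that after $j$ derivatives one still lands, up to a nonvanishing factor, on an admissible family $J_{\bullet,\alpha_j}$ of \cite{Gasull2012b} — i.e.\ that $\alpha_j=j-p-\tfrac12\in\R\setminus\Z^-$ and that $\mu_j\neq0$. Both hold exactly because $\beta$ is odd, which forces $p+\tfrac12-l$ and $\alpha_j$ to be half-integers for all $j,l$ (this is the only place where the parity of $\beta$ enters). The argument also uses two elementary stability properties of ECT-systems — invariance under restriction to a subinterval, and invariance under multiplication by a nonvanishing analytic factor combined with precomposition by a diffeomorphism — which are immediate from the definition of an ECT-system.
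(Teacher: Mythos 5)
Your proof is correct and follows essentially the same route as the paper: both reduce $I^{(j)}_{i,\beta}$ (after differentiating, which lowers the exponent by $j$ and produces the nonzero half-integer factor $\mu_j$) to the family $J_{i,\alpha}$ of \cite{Gasull2012b} via the factorization $h-1+\cos(x)=(h-1)\bigl(1-y\cos(x)\bigr)$ with $y=1/(1-h)\in(-1,0)$, and then invoke Theorem~\ref{T-THM2} with $g(x)=\cos(x)$ and a half-integer $\alpha$. The only cosmetic difference is that the paper phrases the derivative step as $I^{(j)}_{i,\beta}=c\,I_{i,\beta-2j}$ and reuses the same reduction, while you differentiate under the integral sign directly and spell out the (correct) transfer of the ECT property through the nonvanishing prefactor and the diffeomorphism $h\mapsto y(h)$.
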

\begin{proof}
 We have
$$I_{i,\beta}(h)=\int_{0}^{\pi}\cos^i(x)\left(h-1+\cos(x)\right)^{\frac
{\beta} 2} \, dx=(h-1)^\frac{\beta}2 J_{i,-\beta/2}\Big(\frac 1
{1-h}\Big)$$ where
$$J_{i,-\beta/2}(y)=\int_0^{\pi}\cos^i(x)\left (1-y\cos(x)\right)^{\frac{\beta}2} \, dx.$$
Therefore, it suffices to show that $(J_{0,-\beta/2}(y), J_{1,-\beta/2}(y),\ldots,
J_{n,-\beta/2}(y))$ is an ECT-system on $(-1,0),$ which is a direct application of Theorem
\ref{T-THM2}, choosing
 $g(x)=\cos(x)$ and $\alpha =-\beta/2$ in \eqref{E-intfam}.
Observing that for any $j>0$ we have
$$I^{(j)}_{i,\beta}(h)=\left(\frac{\beta}{2}\right)\left(\frac{\beta}{2}-1\right)\ldots
\left(\frac{\beta}{2}-(j-1)\right)I_{i,\beta-2j}(h)$$ completes the
proof.
\end{proof}


\begin{proof}[Proof of Theorem \ref{T-thm outside}]

We prove the result for $M^+(h)$, the proof for $M^-$ follows in the
same way. From Theorem~\ref{T-thm bounds} we have that
$$M^+(h)=\int_{\gamma_h^+}\left(Q^e_{n}(x)
y^{2p+1}+\sum_{s=0}^{r}Q^e_{n,s}(x)y^{2s}\right)\, dx,$$ and direct computations
imply that
$$\int_{\gamma_h^+}\sum_{s=0}^{r}Q^e_{n,s}(x)y^{2s}=Z_r(h)$$
 for a certain polynomial $Z_r$ of  degree $ r$. Furthermore, we have that
$$\int_{\gamma_h^+}Q^e_{n}(x)
y^{2p+1}\, dx=\sum_{i=0}^na_i\int_{\gamma_h^+}\cos(ix)y^{2p+1}\, dx
                    =\sum_{i=0}^nb_i\int_{\gamma_h^+}\cos^i(x)y^{2p+1}\, dx,
$$
for some constants $a_i,b_i\in\R$. So $M^+(h)$ belongs to the linear
space generated by $1,h,\ldots,h^r,I_{0,p}(h),\ldots,I_{n,p}(h)$
where $I_{i,p}(h)=\int_{0}^\pi\cos^i(x)(h-1+\cos(x))^{(2p+1)/2}\,
dx$ are given in \eqref{E-integrals2}. Therefore, it suffices to
show that the family
$$\left(1,h,\ldots,h^r,I_{0,p}(h),\ldots,I_{n,p}(h)\right)$$
is an ECT-system. To this end, let $k\le n$ and consider
$\varphi(h)=\sum_{i=0}^r a_i h^i+ \sum_{i=0}^kc_iI_{i,p}(h).$ Then
$\varphi^{(r+1)}(h)=\sum_{i=0}^kd_iI_{i,p+r+1}(h)$, and Proposition
\ref {P-pre} implies that  either $\varphi^{(r+1)}(h)$ is
identically zero or it has at most $k$ zeros counting multiplicity.
From Rolle's Theorem we obtain that either $\vp(h)$ is identically
zero or it has at most $k+r+1$ zeros counting multiplicity.

The proofs of the cases $Q_{n}(x)\equiv0$ or  $Q_{n,0}(x)\equiv
 Q_{n,1}(x)\equiv\cdots Q_{n,r}(x)\equiv0$ are much easier and
 follow by using the same arguments.
\end{proof}


\section{Simultaneous bifurcation of limit cycles}
 \label{SS-simultaneous}

The point here is to study the maximum number of limit cycles which may bifurcate simultaneously
in the entire cylinder from the periodic orbits of the integrable pendulum, i.e. in
$\mathcal{R}^0\cup\mathcal{R}^\pm$. Notice that this region corresponds  to all
$h\in(0,\infty)\setminus\{2\}$. To this end, we introduce the following notation: given a family
of systems of the form~\eqref{E-penduSysGen} we will say that it admits the configuration of limit
cycles $[c^-;c^0;c^+],$ where $c^-,c^0$ and $c^+$ are nonnegative integers, if there exist values
of the parameters of the system such that the three first order Melnikov integrals associated to
it, $M^-(h)$, $M^0(h)$ and $M^+(h)$ have $c^-$, $c^0$ and $c^+$ simple zeros, respectively, all
of them lying in the corresponding intervals of definition of the Melnikov functions,
that is, $(2,\infty),$ $(0,2)$
and $(2,\infty)$ respectively. With this notation, the results of Theorem~\ref{T-thm bounds} imply that
 the  configuration with the largest number of limit cycles, in case it is realizable,
 would be $[2n+2m+E(m/2)+2;
2n+2E((m-1)/2)+1;2n+2m+E(m/2)+2]$.\\

Even if each of the values of a configuration is optimal, to know when all maximal values are
attained simultaneously is a very intricate problem. In the results
of~\cite{Morozov1989,Sanders1986} for some subcases of system \eqref{E-penduSysGen} the maximal
values are not attained simultaneously, but it  may happen for similar systems, see for instance
\cite{Gar}. In this section we give some examples which illustrate that for other simple cases of
system \eqref{E-penduSysGen} the global optimal values are not attained simultaneously in the
three regions.
 We believe that this
general question is of interest and deserves further work.\\

Our first example is  the subfamily of systems of the form~\eqref{E-penduSysGen}, given by
\begin{equation}
      \label{E-ex1}
           \sys{y,}{-\sin( x)+ \ve (a_0+a_1\cos(x))y,}
     \end{equation}
with $(a_0,a_1)\in\R^2.$ From Theorems~\ref{T-thm inside} and~\ref{T-thm outside} we get that the configuration with the largest
number of limit cycles is $[1;1;1]$. Indeed, considering the functions
\begin{align*}
L_n^+(h)&=I_{n,1}(h)= \int_0^\pi \cos^n(x)\sqrt{h-1+\cos(x)}\,dx, \quad h\in(2,\infty),\\
L_n^0(h)&=I_{n,1}(h)= \int_0^{\arccos(1-h)} \cos^n(x)\sqrt{h-1+\cos(x)}\,dx, \quad h\in(0,2),
\end{align*}
see~\eqref{E-integrals2}, it holds that
\begin{align*}
M^{\pm} (h)&= \pm\big(2 a_0 L_0^+(h)+2a_1 L_1^+(h)\big),\quad h\in(2,\infty),\\
  M^{0} (h)&= 2 a_0 L_0^0(h)+2a_1 L_1^0(h),\quad h\in(0,2),
  \end{align*}
and from our analysis in the previous sections we know that $(L_0^+,L_1^+)$ and $(L_0^0, L_1^0)$
are ETC-systems for $h\in(2,\infty)$ and for $h\in(0,2),$ respectively. Notice that this implies
that the derivatives of the functions $Q^+:=L_1^+/L_0^+$ and $Q^0:=L_1^0/L_0^0$
do not vanish in their respective intervals of definition. In fact, it is easy to see that the
function
\[
Q(h)=\begin{cases} Q^0(h), & h\in(0,2],\\
                   Q^+(h), & h\in[2,\infty),\end{cases}
\]
defined for $h>0,$ is continuous, not differentiable at $h=2$ and decreasing.

Let us prove that the only two possible configurations for limit cycles of system~\eqref{E-ex1} are
$[1;0;1]$ and $[0;1;0]$.

It is clear that $c^+=c^-$ because $M^+(h)=-M^-(h).$ So, we only need to prove that $M^+$ and
$M^0$ can not  simultaneously have a zero in their respective intervals of definition. But this is a
straightforward consequence of the fact that $Q$ is globally decreasing.

Notice that the above  approach works for two integrals due to the nice analogy between the
non-vanishing Wronskians and the monotonicity of the quotients. The generalization to an arbitrary
number of integrals however is far from obvious.

As a second example, consider   the subfamily of systems of the form~\eqref{E-penduSysGen}, given by
\begin{equation}
      \label{E-ex2}
           \sys{y,}{-\sin( x)+ \ve \big(a_0+a_1\cos^2(x)+ a_2 y^{2r+1}\big),}
     \end{equation}
with $(a_0,a_1,a_2)\in\R^3$ and $r\in\N.$ Again, from Theorems~\ref{T-thm inside} and~\ref{T-thm outside} we see that the configuration
with the largest number of  limit cycles possible is $[2;0;2]$. This is because
\begin{align*}
M^{\pm} (h)&=  \alpha_0+\alpha_1 h \pm 2 a_2 W_{2r+1}^+(h)  ,\quad h\in(2,\infty),\\
  M^{0} (h)&=2 a_2 W_{2r+1}^0(h) ,\quad h\in(0,2),
  \end{align*}
for some linearly independent parameters $(\alpha_0,\alpha_1)$, where
\begin{align*}
W_{2r+1}^+(h)&=I_{0,2r+1}^+(h)= \int_0^\pi \big(\sqrt{h-1+\cos(x)}\big)^{2r+1}\,dx,
\quad h\in(2,\infty),\\
W_{2r+1}^0(h)&=I_{0,2r+1}^0(h)= \int_0^{\arccos(1-h)}\big(\sqrt{h-1+\cos(x)}\big)^{2r+1}\,dx, \quad
h\in(0,2),
\end{align*} see~\eqref{E-integrals2}. Hence, since  we know that $(1,h,W_{2r+1}^+(h))$ and
$W_{2r+1}^0(h)$ are ETC-systems for $h\in(2,\infty)$ and for $h\in(0,2),$ respectively, we get
that $c^0=0$, because $W_{2r+1}^0$ does not vanish on $(0,2),$ and that $c^\pm\leq 2$ . Let us
prove that the value $2$ cannot be attained simultaneously by both $c^+$ and $c^-$. Indeed,
when $a_2=0$ the result is trivial and $c^+=c^-\le1.$ When $a_2\ne0$  our problem is equivalent to
finding the maximum number of zeros in $(2,\infty)$ for each of the equations
\[
g_r(h)=\beta_0+\beta_1 h\quad \mbox{and}\quad g_r(h)=-\beta_0-\beta_1 h,
\]
where  $(\beta_0,\beta_1)\in\R^2$ and  $g_r(h)=W_{2r+1}^+(h)$. It is clear that in the interval
$(2,\infty)$ it holds that
\[
g_r(h)>0,\quad  g'_r(h)>0 \quad\mbox{and}\quad g''_r(h)\begin{cases} <0, \quad\mbox{when}\quad
r=0,\\>0,\quad\mbox{when}\quad r\ge1.\end{cases}
\]
From the above inequalities it is not difficult to prove that when $r=0$ the realizable
configurations with a maximal number of limit cycles for system~\eqref{E-ex2} are $[1;0;1],$ $[2;0;0]$
or
$[0;0;2]$. When $r\ge1$ these configurations are $[2;0;1]$ or $[1;0;2].$



\subsection*{Acknowledgments}

The first and third authors are supported by  MINECO grants, with respective  numbers
MTM2013-40998-P and MTM2014-52209-C2-1-P. The first  author is also supported by a CIRIT grant
number 2014SGR568. The second author is  supported by the
 project J3452 ``Dynamical Systems Methods in Hydrodynamics'' of the Austrian Science Fund (FWF).

\end{document}